\theoremstyle{plain}
\newtheorem{theorem}{Theorem}[section]
\newtheorem{lemma}[theorem]{Lemma}
\theoremstyle{definition}
\newtheorem{definition}[theorem]{Definition}
\newtheorem{remark}[theorem]{Remark}
\theoremstyle{remark}
\newtheorem{example}[theorem]{Example}
\numberwithin{equation}{section}
\begin{document}
\title{ A three-term Polak-Ribi\`{e}re-Polyak conjugate gradient method for vector optimization
\footnote{\ This work is supported by National Nature Science Foundation of China (No. 12371304).} }

\author{Guangxuan Lin \footnote{\,
School of Mathematics and Statistics, Qingdao University, Qingdao 266071, Shandong Province, P.R. China. }
\ \ and \ \
Shouqiang Du \footnotemark[2]~ \footnote{\ Corresponding author: Shouqiang Du.  E-mail address: sqdu@qdu.edu.cn.}}
\date{}\maketitle
\begin{abstract}
A novel three-term Polak-Ribi\`{e}re-Polyak conjugate gradient method is  proposed for solving  vector optimization problems. It should be emphasized  that this  is  the first extension of   three-term  conjugate gradient methods  from scalar optimization   to vector optimization.
The method can consistently generate a sufficient descent direction independent of line search procedures and without modifying the conjugate parameters. This result improves upon the corresponding conclusions in SIAM J. Optim. \textbf{28}, 2690-2720 (2018), J. Optim. Theory Appl. \textbf{204},
13 (2025) and Optim. Methods Softw. \textbf{28}, 725-754 (2025).
 Based on a new Wolfe-type line search, the  global convergence of the proposed scheme is  established   without imposing restrictions such as self-adjusting strategies, regular restarts and convexity assumptions. Numerical experiments demonstrate  the favourable performance of the proposed method.

\end{abstract}
{\bf Keywords}: {\it Vector optimization; Pareto optimality; Three-term conjugate gradient method; Polak-Ribi\`{e}re-Polyak conjugate gradient method; Global convergence.}

\medskip
	
\noindent
{\bf 2020 MSC: 90C29, 90C52.}

\maketitle

\section{Introduction}
Conjugate gradient (CG) method, known for its fast convergence  and low memory requirements, is an efficient  first-order algorithm for solving the unconstrained optimization problem
\begin{equation*}
\min F(x),\qquad \forall x\in\mathbf{R}^n,
\end{equation*}
with the iteration
$$x^{k+1}=x^k+\alpha_kd^k\qquad\textrm{for}~k=0,1,2,\ldots,$$
where $F:\mathbf{R}^n\to\mathbf{R}$ is continuously differentiable, $\alpha_k>0$ stands for  the step size and $d^k$ denotes the  search direction given by
$$d^k=\left\{\begin{array}{l}-\nabla F(x^k)\qquad\text{for}\ k=0,\\
-\nabla F(x^k)+\beta_kd^{k-1}\qquad\text{for}\ k\ge1.
\end{array}\right.$$
Here, $\beta_k$ is called the conjugate  parameter.  Different choices of conjugate parameters correspond to  various CG methods. Several famous CG methods are listed as follows:
$$\begin{aligned}
&\text{Dai-Yuan (DY)} \cite{Dai1}:~~\beta_k^{DY}=-\frac{\langle\nabla F(x^k), \nabla F(x^k)\rangle}{\langle d^{k-1}, \nabla F(x^{k})-\nabla F(x^{k-1})\rangle};\\[1mm]
&\text{Fletcher-Reeves (FR)} \cite{Fle}:~~\beta^{FR}_k=\frac{\langle\nabla F(x^k), \nabla F(x^k)\rangle}{\langle\nabla F(x^{k-1}), \nabla F(x^{k-1})\rangle};\\[1mm]
&\text{Hestenes-Stiefel (HS)} \cite{Hes}:~~\beta^{HS}_k=\frac{\langle\nabla F(x^k), \nabla F(x^k)-\nabla F(x^{k-1})\rangle}{\langle d^{k-1},\nabla F(x^k)-\nabla F(x^{k-1})\rangle};\\[1mm]
&\text{Polak-Ribi\`{e}re-Polyak (PRP)} \cite{Pol}:~~\beta^{PRP}_k=\frac{\langle\nabla F(x^k), \nabla F(x^k)-\nabla F(x^{k-1})\rangle}{\langle\nabla F(x^{k-1}), \nabla F(x^{k-1})\rangle}.
\end{aligned}$$

Usually, $d^k$ should satisfy $\langle\nabla F(x^k),d^k\rangle< 0$ to ensure the  well-definedness of CG methods. In most convergence analyses, we  need a stronger descent condition on $d^k$, called sufficient descent condition, given as
$$\langle\nabla F(x^k),d^k\rangle\leq h\|\nabla F(x^k)\|^2$$
for some $h>0$.

It is well known that the PRP  CG method shows  better performance compared with other CG methods  in  real world applications. The global convergence of the PRP CG method was established in \cite{Pol} by using the exact line search procedure under the strong convexity hypothesis of the objective function. However, when the objective function is nonconvex,  the exact line search can lead to a nonconvergent sequence based on the PRP CG method (see \cite{Pow}). In \cite{Gil}, Gilbert and  Nocedal showed the global convergence of the PRP CG method under the condition that the conjugate parameters are restricted to be nonnegative. Dai et al. \cite{Dai} claimed   that the positiveness constraint of conjugate parameters cannot be relaxed for the PRP CG method.

On the other hand, some variations of CG methods, known as the three-term CG method, have been widely studied, such as the three-term FR CG method \cite{Zha2}, the three-term HS CG method  \cite{Zha3}
and the three-term PRP CG method \cite{Zha1,And}.  The general form of  three-term CG methods was proposed in  \cite{Nar,Al}.
An attractive property of these  three-term CG methods is that the search direction generated by these methods is invariably a sufficient descent direction, regardless of any line search and without modifying the conjugate parameters.

Vector optimization,  which originates  from the practical demands of multi-criteria decision making,  is widely applied in modern scientific fields such as financial investment \cite{Ali,Flie}, engineering \cite{De}, resource allocation \cite{Gra},  machine design \cite{Jah} and space exploration \cite{Tav}. Very recently, the theory of vector optimization has  found increasing application  in frontier domains including machine learning \cite{ZhouS,SunJ}, neural networks \cite{WangZ}, computer engineering \cite{XuY} and sensor technology \cite{JiangW}.
Consequently, the study of vector optimization problems has increasingly become  essential.

Unconstrained vector optimization problem aims to minimize or maximize  a vector-valued function in the setting of   a partial order induced by a cone in the image space.
Given a continuously differentiable function  $\Phi:\mathbf{R}^n\to\mathbf{R}^m$ and a closed, convex and pointed cone $ E\subset \mathbf{R}^m$ with int$(E)\neq\emptyset$,  the  problem is usually defined as
\begin{equation*}\label{1}
\min_E~\Phi(u),\qquad \forall u\in\mathbf{R}^n.
\end{equation*}
Here, the minimum concept is defined relative to the partial order  induced by the cone $E$, called  Pareto optimality  (see Section 2 below). Moreover, when $E=\mathbf{R}^m_+$ with the usual induced partial order, this problem can be reduced to the multiobjective  optimization problem, which seeks to  find acceptable trade-offs between the objectives.
In this setting, a point is called Pareto optimal if  no other point can improve  one or more objectives without worsening any other objective.
The  subject  of multiobjective optimization can be found in \cite{Elb,Fli,Fli2,Hub,Liu,Pen}.

Recently, some first-order  and second-order scalar  methods, including but not limited to the steepest descent method \cite{Dru},  the subgradient method \cite{Bel}, the projected gradient method \cite{Dru2}, the Newton-type method \cite{Chu,Dru1} and the quasi-Newton method \cite{Ans}, have been  extended to  vector optimization.  Unlike the steepest descent method, which often exhibits the sawtooth phenomenon, the CG method achieves faster convergence by  using the conjugate search direction, avoiding inefficient  and redundant  iteration steps.  Compared to the Newton-type method, the CG method is computationally more efficient as it avoids computing the Hessian matrix of the objective function and its inverse.  Furthermore, the CG method can be compared with the Newton-type method in terms of  convergence rate.
Hence,  exploring the vector extension of CG methods  holds great promise.

More recently, in \cite{Luc}, the  FR, CD, DY, PRP and HS CG methods were first extended to solve the unconstrained vector optimization  problem. Subsequently, the vector extensions of the Hager-Zhang  CG method \cite{Gon2}, the Liu-Storey CG method \cite{Gon1}, the  Dai-Liao CG method \cite{HuQ}, the spectral CG method \cite{He} and the hybrid CG method \cite{Yaha}  were established.  More research on CG methods for vector optimization was considered in \cite{ChenK,Hu,Ya,Zhan}. It is important to note that in these works, the  vector extension of CG methods  may not always yield a descent direction, even when employing the exact or Wolfe-type line search procedure. This phenomenon is prevalent in vector optimization, as illustrated by the examples provided in \cite{Gon2,Gon1,He,HuQ}. To ensure the descent property of CG methods in these studies,  it is necessary to  either rely on sufficiently accurate line search procedures (see e.g., \cite{Gon2,Gon1}) or redefine the conjugate parameters (see  e.g., \cite{He,Hu,Ya,Zhan,Chen}).
However, adopting a specific line search  or modifying the conjugate parameters may cause additional computational costs, as shown by the numerical experiments in \cite{Gon2,Gon1,Hu}.
Therefore, it is valuable to explore a descent CG method for vector optimization regardless of  any line search and without the modification of conjugate parameters.

On the other hand, the vector extension of modified PRP CG methods  has been discussed in \cite{Chen,HuQ1} recently. However, as far as we know,  the descent  PRP-type CG method,  independent of any line search and  without modifying the conjugate parameters, has not been extended to vector optimization, which is one of the main motivations for the current study.

Motivated by the above works,   a new vector extension of the three-term PRP CG method is  proposed in this paper.
We show the  sufficient descent property and the  convergence result of the proposed scheme.
Numerical experiments are also presented to illustrate  the  excellent performance of the  method.
The main novelties of this paper are as follows:
\begin{itemize}
\item This work can be regarded as the first extension of three-term CG methods from scalar optimization  to vector optimization, which fills  a key  gap in the application of  three-term CG methods  in the field of vector optimization.

\item The proposed extension  has the excellent characteristics of the traditional three-term PRP CG method, i.e., it can  always  generate a  sufficient descent direction  irrespective of any line search and without redefining the conjugate parameters, which improves upon the corresponding results in \cite{Luc,Chen,HuQ1}.

\item  Based on a novel line search, called  the generalized Wolfe line search,  we establish the global convergence of the proposed scheme  without relying on restrictive conditions such as self-adjusting strategies, regular restarts and convexity assumptions.
\end{itemize}

The  rest of the paper  is structured as follows:
In  Sect. 2, some  notions and preliminary results related to vector optimization  are presented. In Sect. 3, we propose a vector extension of the three-term PRP CG method that invariably  generates  a sufficient descent direction independent of any line search and without modifying the conjugate parameters.  In Sect. 4, the convergence result of the proposed method is established  using a generalized Wolfe line search procedure. In Sect. 5, we provide some numerical experiments  to show the favourable behavior of the scheme. Some remarks and future directions are discussed in the last section.

\section{Preliminaries}
Let $ E\subset \mathbf{R}^m$ be a closed, convex and pointed cone with int$(E)\neq\emptyset$. Define the partial order $\preceq_E$ in $\mathbf{R}^m$ induced by $E$ (resp. $\prec_E$ in $\mathbf{R}^m$ induced by int$(E)$)   by
$$ u_1\preceq_E u_2\Leftrightarrow u_2-u_1\in E~(\text{resp}.~ u_1\prec_E u_2\Leftrightarrow u_2-u_1\in\text{int}(E)).$$

In this work, we mainly  focus on  the unconstrained vector optimization problem
\begin{equation}\label{1}
\min_E~\Phi(u),\qquad \forall u\in\mathbf{R}^n,
\end{equation}
where $\Phi:\mathbf{R}^n\to\mathbf{R}^m$ is continuously differentiable.
We call  $u^*\in\mathbf{R}^n$   $E$-Pareto optimal   iff there is no $u\in\mathbf{R}^n$ such that $\Phi(u)\preceq_E\Phi(u^*)$ and $\Phi(u)\neq \Phi(u^*)$. For $u\in\mathbf{R}^n$, we call it $E$-Pareto critical for $\Phi$ if it satisfies the condition
\begin{equation*}
-\text{int}(E)\cap\textrm{Im}(J\Phi(u))=\emptyset,
\end{equation*}
where $J\Phi(u)$ denotes the Jacobian of $\Phi$ at $u$ and Im$(J\Phi(u))$ stands for the image on $\mathbf{R}^n$ by $J\Phi(u)$. This condition is  also a necessary condition for  local Pareto optimality.

If $u\in\mathbf{R}^n$ is not $E$-Pareto critical, then there exists $d\in\mathbf{R}^n$ such that
$ J\Phi(u)d\in-\text{int}(E)$, meaning that
\begin{equation}\label{17-}
 \Phi(u+\kappa d)\prec_E\Phi(x)
\end{equation}
for all $\kappa\in(0,\varepsilon)$ with some $\varepsilon>0$.  We call $d\in\mathbf{R}^n$  an $E$-descent direction for $\Phi$ at $u$ if it satisfies (\ref{17-}).

We next proceed to describe descent directions and critical points  for vector optimization with scalar results.
Denote  the  dual polar cone of $E$ as
$$E^*:=\{ v\in \mathbf{R}^m: \langle v,z\rangle\geq0,\ \forall z\in E\}. $$
Since $E$ is closed and convex, we use the definition of $E^*$ to obtain $E=E^{**}$,
$$-E=\{v\in\mathbf{R}^m: \langle v,z\rangle\leq0,\ \forall z\in E^*\}$$
and
$$-\text{int}(E)=\{v\in\mathbf{R}^m:\langle v,z\rangle<0,\ \forall z\in E^*-\{0\}\}. $$
Let $V\subset E^*-\{0\}$ be a compact set  such that
$$\text{cone}(\text{co}(V))=E^*,$$
where co$(V)$ denotes the convex hull of $V$ and cone$(\text{co}(V))$ stands for the cone generated by co$(V)$.
By the definition of $V$, we obtain $0\notin$ co$(V)$. Hence,
$$-E=\{v\in\mathbf{R}^m: \langle v,z\rangle\leq0,\ \forall z\in V\}$$
and
$$-\text{int}(E)=\{v\in\mathbf{R}^m: \langle v,z\rangle<0,\ \forall z\in V\}.$$
For a general  cone $E$, the set
$$ V:=\{v\in E^*:\|v\|=1\}$$
has the above property.  Unless otherwise specified, $V$ will denote exactly this set.
\begin{remark}
In multiobjective optimization, $E=E^*=\mathbf{R}_+^{m}$ and $V$ is usually  taken as the canonical basis of $\mathbf{R}^m$.
\end{remark}

We now introduce a scalar function to describe  descent directions and critical points. Define $\lambda:\mathbf{R}^n\times \mathbf{R}^n\to\mathbf{R}$ by
$$\lambda(u,d):=\sup\{\langle J\Phi(u)d,v\rangle:v\in V\}.$$
Obviously, the function $\lambda$ is well-defined due to the compactness of $V$. Based on the above foundations, we can easily check that
\begin{itemize}
\item  $u$ is $E$-Pareto critical for $\Phi$ iff $\lambda(u,d)\geq0$ for all $d\in \mathbf{R}^n$;
\item  $d$ is an $E$-descent direction for $\Phi$ at $u$ iff $\lambda(u,d)<0$.
\end{itemize}
Some basic properties of the function $\lambda$ are presented as follows:
\begin{lemma}\cite{Gon1}\label{L1}
Under the function $\Phi:\mathbf{R}^n\to \mathbf{R}^m$ is continuously differentiable, we have that
\begin{enumerate}
\item[\rm(i)] for any $u,v,z\in\mathbf{R}^n$ and $\kappa>0$,
$$ \lambda(u,v+\kappa z)\leq\lambda(u,v)+\kappa\lambda(u,z);$$
\item[\rm(ii)] the function $\lambda(\cdot,\cdot)$ is continuous;
\item[\rm(iii)]  $\lambda(\cdot,d)$ is $L\|d\|$-Lipschitz continuous  if the Jacobian $J\Phi$ is $L$-Lipschitz continuous.
\end{enumerate}
\end{lemma}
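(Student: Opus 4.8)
The plan is to deduce all three statements from the linearity of $J\Phi(u)$ in its direction argument, the compactness of $V$, and two elementary facts about suprema (the lemma is quoted from \cite{Gon1}, but a self-contained argument is short). For \textrm{(i)}, I would fix $u\in\mathbf{R}^n$ and $\kappa>0$ and use linearity to write $J\Phi(u)(v+\kappa z)=J\Phi(u)v+\kappa\,J\Phi(u)z$, so that $\langle J\Phi(u)(v+\kappa z),w\rangle=\langle J\Phi(u)v,w\rangle+\kappa\langle J\Phi(u)z,w\rangle$ for every $w\in V$. Taking the supremum over $w\in V$ and invoking $\sup_w(a_w+b_w)\le\sup_w a_w+\sup_w b_w$ (legitimate since $\kappa>0$) gives the subadditivity-type inequality at once; no compactness is required here.

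For \textrm{(iii)}, I would start from the elementary bound $\big|\sup_w f(w)-\sup_w g(w)\big|\le\sup_w|f(w)-g(w)|$ applied with $f(w)=\langle J\Phi(u_1)d,w\rangle$ and $g(w)=\langle J\Phi(u_2)d,w\rangle$, which yields
$$\big|\lambda(u_1,d)-\lambda(u_2,d)\big|\le\sup_{w\in V}\big|\langle(J\Phi(u_1)-J\Phi(u_2))d,\,w\rangle\big|\le\sup_{w\in V}\big\|(J\Phi(u_1)-J\Phi(u_2))d\big\|\,\|w\|.$$
Since $\|w\|=1$ for every $w\in V$ and $\|(J\Phi(u_1)-J\Phi(u_2))d\|\le\|J\Phi(u_1)-J\Phi(u_2)\|\,\|d\|\le L\|u_1-u_2\|\,\|d\|$ under the $L$-Lipschitz assumption on $J\Phi$, the $L\|d\|$-Lipschitz property of $\lambda(\cdot,d)$ follows directly.

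The genuinely non-automatic part is \textrm{(ii)}: joint continuity does not follow from \textrm{(iii)}, which only controls the $u$-variable and only under a Lipschitz hypothesis, so I would argue directly, viewing $\lambda$ as a parametric maximum over the compact set $V$. The map $g(u,d,w):=\langle J\Phi(u)d,w\rangle$ is jointly continuous on $\mathbf{R}^n\times\mathbf{R}^n\times V$ because $\Phi\in C^1$ makes $u\mapsto J\Phi(u)$ continuous and the bilinear pairing is continuous; compactness of $V$ then guarantees $\lambda(u,d)=\max_{w\in V}g(u,d,w)$ with the maximum attained. For a sequence $(u_k,d_k)\to(u_0,d_0)$ I would get the lower bound by testing with a maximizer $w_0$ at $(u_0,d_0)$, giving $\liminf_k\lambda(u_k,d_k)\ge\lim_k g(u_k,d_k,w_0)=\lambda(u_0,d_0)$, and the upper bound by passing to a subsequence realizing $\limsup_k\lambda(u_k,d_k)$, choosing maximizers $w_k\in V$ along it, extracting $w_{k_j}\to\bar w\in V$ by compactness, and taking limits to obtain $\limsup_k\lambda(u_k,d_k)=g(u_0,d_0,\bar w)\le\lambda(u_0,d_0)$; the two bounds together give $\lambda(u_k,d_k)\to\lambda(u_0,d_0)$. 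Equivalently, one may simply cite Berge's maximum theorem with the constant correspondence $V$. The only point requiring care is the interplay of the two limiting processes in the $\limsup$ estimate, and the compactness of $V$ is exactly what makes it go through.
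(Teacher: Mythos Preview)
Your argument is correct in all three parts. The paper itself does not prove this lemma: it is stated with a citation to \cite{Gon1} and no proof is given, so there is no in-paper argument to compare against. Your self-contained treatment---subadditivity in (i) from linearity and $\sup(a+b)\le\sup a+\sup b$, joint continuity in (ii) via compactness of $V$ and a sequential $\liminf/\limsup$ sandwich (or equivalently Berge's theorem), and the Lipschitz estimate in (iii) from $|\sup f-\sup g|\le\sup|f-g|$ together with the paper's standing convention $V=\{v\in E^*:\|v\|=1\}$ so that $\|w\|=1$---is exactly the standard route and goes through without issue.
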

Define the steepest direction $\vartheta:\mathbf{R}^n\to\mathbf{R}^n$ for  vector optimization, as proposed in \cite{Dru}, by
\begin{equation}\label{2}
\vartheta(u):={\arg\min}_{d\in\mathbf{R}^n}\lambda(u,d)+\frac{\|d\|^2}{2}.
\end{equation}
The corresponding optimal value  in (\ref{2}) is defined as
$$\Theta(u):=\lambda(u,\vartheta(u))+\frac{\|\vartheta(u)\|^2}{2}.$$
Since the function $\lambda(u,\cdot)$ is  closed and convex, $\vartheta(u)$   exists uniquely. The functions  $\vartheta$ and $\Theta$ have the following useful properties.
\begin{lemma}\cite{Dru}\label{L2}
Under the function $\Phi:\mathbf{R}^n\to \mathbf{R}^m$ is continuously differentiable, we have that
\begin{enumerate}
\item[\rm(i)] if $u$ is $E$-Pareto critical, then $\vartheta(u)=0$ and $\Theta(u)=0$;
\item[\rm(ii)] if $u$ is not $E$-Pareto critical, then $\vartheta(u)\neq0$  is an $E$-descent direction for $\Phi$ at $u$, $\Theta(u)<0$ and
$$\lambda(u,\vartheta(u))<-\frac{\|\vartheta(u)\|^2}{2}<0;$$
\item[\rm(iii)] the functions $\vartheta$ and $\Theta$ are continuous.
\end{enumerate}
\end{lemma}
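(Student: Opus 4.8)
The plan is to exploit three structural facts about $\lambda(u,\cdot)$: it is closed and convex (as already noted), positively homogeneous of degree one (immediate from $\lambda(u,d)=\sup_{v\in V}\langle J\Phi(u)d,v\rangle$ and positivity of the scaling), and $\lambda(u,0)=0$; together with the two equivalences recorded just before the statement, namely that $u$ is $E$-Pareto critical iff $\lambda(u,d)\ge 0$ for all $d$, and that $d$ is an $E$-descent direction iff $\lambda(u,d)<0$. Throughout I write $\phi_u(d):=\lambda(u,d)+\tfrac12\|d\|^2$, so that $\vartheta(u)$ is the unique minimizer of the strictly convex coercive function $\phi_u$ and $\Theta(u)=\phi_u(\vartheta(u))$. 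For (i), if $u$ is $E$-Pareto critical then $\phi_u(d)\ge 0=\phi_u(0)$ for every $d$, so $0$ minimizes $\phi_u$; uniqueness gives $\vartheta(u)=0$ and hence $\Theta(u)=\phi_u(0)=0$.

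For (ii), pick $\bar d$ with $\lambda(u,\bar d)<0$ (possible since $u$ is not critical); along the ray $t\mapsto\phi_u(t\bar d)=t\,\lambda(u,\bar d)+\tfrac{t^2}{2}\|\bar d\|^2$ the value is negative for small $t>0$, so $\Theta(u)=\min_d\phi_u(d)<0$, and since $\phi_u(0)=0$ this already forces $\vartheta(u)\ne 0$. To obtain the sharp inequality $\lambda(u,\vartheta(u))<-\tfrac12\|\vartheta(u)\|^2$, I would minimize $\phi_u$ along the ray spanned by $\vartheta(u)$: the scalar function $q(t):=\phi_u(t\vartheta(u))=t\,\lambda(u,\vartheta(u))+\tfrac{t^2}{2}\|\vartheta(u)\|^2$ is a strictly convex quadratic (as $\vartheta(u)\ne 0$) whose minimum over $t>0$ is attained at $t=1$ because $\vartheta(u)$ is the global minimizer of $\phi_u$; matching $t=1$ with the vertex $t^\ast=-\lambda(u,\vartheta(u))/\|\vartheta(u)\|^2$ yields $\lambda(u,\vartheta(u))=-\|\vartheta(u)\|^2<-\tfrac12\|\vartheta(u)\|^2<0$, so $\vartheta(u)$ is an $E$-descent direction (and, as a by-product, $\Theta(u)=-\tfrac12\|\vartheta(u)\|^2$). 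The one point needing care here is the claim that $t=1$ minimizes $q$ over $(0,\infty)$: if the vertex $t^\ast$ were $\le 0$ then $q$ would be increasing on $(0,\infty)$, making $q(1)\le q(1/2)$ fail and contradicting global optimality of $\vartheta(u)$; hence $t^\ast>0$, and strict convexity then forces $t^\ast=1$.

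For (iii), the plan is a standard boundedness-plus-subsequence argument. Given $u_k\to u$, from $\phi_{u_k}(\vartheta(u_k))\le\phi_{u_k}(0)=0$ and $\lambda(u_k,\vartheta(u_k))=\sup_{v\in V}\langle J\Phi(u_k)\vartheta(u_k),v\rangle\ge-\|J\Phi(u_k)\|\,\|\vartheta(u_k)\|$ (using $\|v\|=1$ on $V$) I get $\tfrac12\|\vartheta(u_k)\|^2\le\|J\Phi(u_k)\|\,\|\vartheta(u_k)\|$, hence $\|\vartheta(u_k)\|\le 2\|J\Phi(u_k)\|$, which is bounded since $J\Phi$ is continuous. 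Let $\bar d$ be the limit of any convergent subsequence $\vartheta(u_{k_j})$. Passing to the limit in $\phi_{u_{k_j}}(\vartheta(u_{k_j}))\le\phi_{u_{k_j}}(d)$ and using the continuity of $\lambda$ from Lemma~\ref{L1}(ii) gives $\phi_u(\bar d)\le\phi_u(d)$ for every $d$, so $\bar d=\vartheta(u)$ by uniqueness; since the bounded sequence $\{\vartheta(u_k)\}$ has every subsequential limit equal to $\vartheta(u)$, it converges to $\vartheta(u)$, proving that $\vartheta$ is continuous, and then $\Theta=\lambda(\cdot,\vartheta(\cdot))+\tfrac12\|\vartheta(\cdot)\|^2$ is continuous again by Lemma~\ref{L1}(ii). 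I expect part (iii) — specifically pinning down the a priori bound on $\|\vartheta(u_k)\|$ and cleanly passing the limit through the supremum defining $\lambda$ — to be the main technical obstacle, with the ray-optimization step in (ii) a close second.
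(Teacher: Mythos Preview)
Your proof is correct. The paper itself does not prove this lemma: it is stated with a citation to Drummond--Svaiter \cite{Dru} and no argument is given, so there is no in-paper proof to compare against.

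One remark on part (ii): the ray-minimization step is valid and in fact yields the stronger identity $\lambda(u,\vartheta(u))=-\|\vartheta(u)\|^2$ (whence $\Theta(u)=-\tfrac12\|\vartheta(u)\|^2$), but for the lemma as stated it is not needed. Once you have established $\Theta(u)<0$ and $\vartheta(u)\ne 0$, the chain $\lambda(u,\vartheta(u))<-\tfrac12\|\vartheta(u)\|^2<0$ is nothing more than the definition $\Theta(u)=\lambda(u,\vartheta(u))+\tfrac12\|\vartheta(u)\|^2$ rewritten. Your argument for (iii) is the standard one: the a priori bound $\|\vartheta(u_k)\|\le 2\|J\Phi(u_k)\|$ is exactly the right device, and passing to the limit via Lemma~\ref{L1}(ii) together with uniqueness of the minimizer is clean.
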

\begin{remark}
In multiobjective optimization, we choose $E=\mathbf{R}^m_+$ and $V$ as the canonical basis of $\mathbf{R}^m$, then $\vartheta(u)$ can be obtained by solving the  convex quadratic problem:
\begin{equation}\label{19-}\begin{aligned}
&\min~ \alpha+\frac{\|d\|^2}{2},\\
&\text{s. t.}~~[J\Phi(u)d]_i\leq \alpha,~~~~i=1,\ldots,m.
\end{aligned}\end{equation}
\end{remark}
We now turn attention to some line search methods  for  vector optimization.
\begin{definition}\cite{Luc}\label{D1}
It is said that a step size $\kappa>0$ satisfies
\begin{itemize}
\item  the  exact line search condition if
\begin{equation}\label{20-}
\lambda(u+\kappa d,d)=0;
\end{equation}
\item the standard Wolfe conditions if
\begin{equation*}\label{24-}\begin{aligned}
&\Phi(u+\kappa d)\preceq_E \Phi(u)+\rho\kappa\lambda(u,d)\xi,\\
&\lambda(u+\kappa d,d)\geq\sigma \lambda(u,d);
\end{aligned}\end{equation*}
\item  the strong Wolfe conditions if
\begin{equation}\label{3}\begin{aligned}
 &\Phi(u+\kappa d)\preceq_E \Phi(u)+\rho\kappa\lambda(u,d)\xi,\\
&|\lambda(u+\kappa d,d)|\leq\sigma |\lambda(u,d)|;
\end{aligned}\end{equation}
\end{itemize}
where $0<\rho<\sigma<1$ are given constants, $\xi\in E$ is such that
\begin{equation}\label{18-}
0<\langle \xi,v\rangle\leq 1,\qquad\forall v\in V.
\end{equation}
\end{definition}
We now introduce a new  vector extension of Wolfe-type conditions, based on the scalar form proposed in \cite{Che}.
\begin{definition}\label{D2}
We call that a step size $\kappa>0$ satisfies the generalized Wolfe conditions if
\begin{equation}\begin{aligned}\label{2*}
&\Phi(u+\kappa d)\preceq_E \Phi(u)+\rho\kappa\lambda(u,d)\xi,\\
&-\mu \lambda(u,d)\geq\lambda(u+\kappa d,d)\geq\sigma \lambda(u,d),
\end{aligned}\end{equation}
where $0<\rho<\sigma<1$ and $\mu\geq0$, and $\xi\in E$  satisfies (\ref{18-}).
\end{definition}

\begin{remark}
Usually, we choose  $\xi=\hat{\xi}/\hat{M}$ to satisfy (\ref{18-}), where $\hat{\xi}\in\text{int}(E)$ and $\hat{M}=\max\{\langle \hat{\xi},v\rangle:v\in V\}$. Particularly in the multiobjective optimization context,  taking $\xi=[1,1,\ldots,1]^T\in\mathbf{R}^m$ can satisfy (\ref{18-}).
\end{remark}

\section{Three-term PRP CG method for vector optimization}
In this part, we propose a novel three-term PRP CG method with sufficient descent property for vector optimization.

The vector extension of the PRP conjugate parameter is defined as
\begin{equation*}\label{22-} \beta_k^{PRP}:=\frac{-\lambda(x^{k},\vartheta(x^{k}))+\lambda(x^{k-1},\vartheta(x^{k}))}
{-\lambda(x^{k-1},\vartheta(x^{k-1}))},\end{equation*}
which was first proposed in \cite{Luc}  by Lucambio P\'{e}rez and  Prudente  for solving the unconstrained vector optimization problem (\ref{1}) with the recurrence
$$x^{k+1}=x^k+\alpha_kd^k\qquad\text{for}~k=0,1,2,\ldots.$$
In order for the vector extension of CG methods to be well-defined,   $d^k$ should be an $E$-descent direction of $\Phi$ at $x^k$, i.e., $\lambda(x^k,d^k)<0$. The more stringent condition, called the sufficient descent condition proposed in \cite{Luc}, is defined as
\begin{equation}\label{4}
\lambda(x^k,d^k)\leq h\lambda(x^k,\vartheta(x^k))
\end{equation}
for some $h\in(0,1]$.
In \cite{Luc}, under the descent assumption and the nonnegativity of conjugate parameters,  the authors establish  the convergence of the PRP CG method using the strong Wolfe  line search,
in which  the search direction is given as
\begin{equation}\label{6-}
d^k=\left\{\begin{array}{l}\vartheta(x^k)\qquad\text{for}\ k=0,\\
\vartheta(x^k)+\beta^{PRP+}_kd^{k-1}\qquad\text{for}\ k\ge1,
\end{array}\right.\end{equation}
where
$\beta_k^{PRP+}:=\max\{0,\beta_k^{PRP}\}$.

However, for the PRP CG method, the search direction  (\ref{6-})  may fail to satisfy (\ref{4}) and may not even be a descent  direction, as shown in the following case.
\begin{example}\label{e1}
Consider problem (\ref{1}), where $E= \mathbf{R}_+^{2}$, $V=\{[1,0]^T,[0,1]^T\}$ and $\Phi:\mathbf{R}^2\to \mathbf{R}^2$ is given by
$$ \Phi_1(x_1,x_2):=\frac{x_1^2+\sin x_2}{2}\quad\text{and}\quad
\Phi_2(x_1,x_2):=\frac{(x_1-1)^2-(x_2-1)^2}{2}.$$
Choose $x^0=(1.5,0.9)^T$. By solving problem (\ref{19-}), we obtain  $\vartheta(x^0)=(-0.5,-0.1)^T$. Hence, $d^0=\vartheta(x^0)=(-0.5,-0.1)^T$. It is clear that $\alpha_0=3.1669$ satisfies  (\ref{20-}), then $x^1=x^0+\alpha_0d^0=(-0.0835,0.5833)^T$. Therefore, $\vartheta(x^1)=(0.0835,-0.4173)^T$,
$$\beta_1^{PRP+}=\max\bigg\{0,\frac{-\lambda(x^{1},\vartheta(x^{1}))+\lambda(x^{0},\vartheta(x^{1}))}
{-\lambda(x^{0},\vartheta(x^{0}))}\bigg\}=0.6966,$$
and  $d^1=\vartheta(x^1)+\beta_1^{PRP+}d^0=(-0.2649,-0.4870)^T$. Since $\lambda(x^1,d^1)=0.0840>0$, we deduce that $d^1$ is not an $E$-descent direction of $\Phi$ at $x^1$.
\end{example}
The example shows that   the PRP CG method for vector optimization may fail to generate a descent  direction in some cases, even when using the exact line search. This leads to additional  search steps or  nonconvergence   for this vector extension. In \cite{Luc}, the authors provided a sufficient condition on conjugate parameters to guarantee the descent property of CG methods. However, an open problem in their work is how to ensure the descent property of  the PRP CG method. To deal with this problem, we propose a three-term PRP CG method for vector optimization without  modifying the conjugate parameters, where the search direction is defined as
\begin{equation}\label{9-}
d^k:=\left\{\begin{array}{l}\vartheta(x^k)\qquad\text{for}\ k=0,\\
\vartheta(x^k)+\beta^{PRP+}_kd^{k-1}
-\beta^{PRP+}_k\frac{|\lambda(x^k,d^{k-1})|}{\lambda(x^{k},\vartheta(x^{k}))}\vartheta(x^k)\qquad\textrm{for}\ k\ge1.
\end{array}\right.\end{equation}
The main conclusion in this part is as follows:
\begin{theorem}\label{T0}
If $x^k$ is not $E$-Pareto critical  and  the  search direction $d^k$ is taken as (\ref{9-}), then $d^k$ satisfies (\ref{4}) with $h=1$.
\end{theorem}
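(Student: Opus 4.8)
The plan is to show directly that when $d^k$ is given by the three-term formula (\ref{9-}) for $k\ge 1$, then $\lambda(x^k,d^k) \le \lambda(x^k,\vartheta(x^k))$, which is exactly (\ref{4}) with $h=1$; the case $k=0$ is trivial since $d^0=\vartheta(x^0)$. The key observation is that the extra third term in (\ref{9-}) is engineered precisely to cancel the potentially-positive contribution of the middle term $\beta_k^{PRP+}d^{k-1}$.

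First I would apply the subadditivity property of $\lambda(u,\cdot)$ from Lemma \ref{L1}(i). Since $\beta_k^{PRP+}\ge 0$ and $|\lambda(x^k,d^{k-1})|/(-\lambda(x^k,\vartheta(x^k)))\ge 0$ (here using that $x^k$ is not $E$-Pareto critical, so by Lemma \ref{L2}(ii) we have $\lambda(x^k,\vartheta(x^k))<0$, making the denominator negative and the whole coefficient well-defined and nonnegative), I can write $d^k$ as a sum $\vartheta(x^k) + c_1 d^{k-1} + c_2\vartheta(x^k)$ with $c_1,c_2\ge 0$, where $c_1 = \beta_k^{PRP+}$ and $c_2 = -\beta_k^{PRP+}|\lambda(x^k,d^{k-1})|/\lambda(x^k,\vartheta(x^k))$. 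Applying Lemma \ref{L1}(i) twice gives
\begin{equation*}
\lambda(x^k,d^k) \le (1+c_2)\lambda(x^k,\vartheta(x^k)) + c_1\lambda(x^k,d^{k-1}).
\end{equation*}

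Next I would bound the term $c_1\lambda(x^k,d^{k-1})$. The point is that $\lambda(x^k,d^{k-1})\le |\lambda(x^k,d^{k-1})|$, so
\begin{equation*}
c_1\lambda(x^k,d^{k-1}) = \beta_k^{PRP+}\lambda(x^k,d^{k-1}) \le \beta_k^{PRP+}|\lambda(x^k,d^{k-1})|.
\end{equation*}
On the other hand, $c_2\lambda(x^k,\vartheta(x^k)) = -\beta_k^{PRP+}\dfrac{|\lambda(x^k,d^{k-1})|}{\lambda(x^k,\vartheta(x^k))}\cdot\lambda(x^k,\vartheta(x^k)) = -\beta_k^{PRP+}|\lambda(x^k,d^{k-1})|$. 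Adding these, the $\beta_k^{PRP+}|\lambda(x^k,d^{k-1})|$ contributions cancel exactly, and I am left with
\begin{equation*}
\lambda(x^k,d^k) \le \lambda(x^k,\vartheta(x^k)) + c_2\lambda(x^k,\vartheta(x^k)) + c_1\lambda(x^k,d^{k-1}) \le \lambda(x^k,\vartheta(x^k)),
\end{equation*}
which is (\ref{4}) with $h=1$.

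The step that needs the most care is not any single inequality but the bookkeeping around signs: one must be sure the coefficient multiplying $\vartheta(x^k)$ in the third term is genuinely nonnegative so that Lemma \ref{L1}(i) applies — this rests on $\lambda(x^k,\vartheta(x^k))<0$, guaranteed by Lemma \ref{L2}(ii) under the hypothesis that $x^k$ is not $E$-Pareto critical — and one must track that the cancellation between $c_1|\lambda(x^k,d^{k-1})|$ (coming from the crude bound on the middle term) and the exact value $c_2\lambda(x^k,\vartheta(x^k))$ (coming from the definition of $c_2$) is exact rather than merely approximate. Everything else is a direct application of subadditivity. I would also remark, as the authors likely do, that this argument uses no property of the line search and no modification of $\beta_k^{PRP}$ beyond the standard nonnegative truncation $\beta_k^{PRP+}$, which is the whole point of the construction.
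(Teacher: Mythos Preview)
Your proposal is correct and follows essentially the same route as the paper: apply the sublinearity of $\lambda(x^k,\cdot)$ from Lemma~\ref{L1}(i) to the three-term expression~(\ref{9-}), use $\lambda(x^k,\vartheta(x^k))<0$ from Lemma~\ref{L2}(ii) to check the coefficient signs, and observe that the term $\beta_k^{PRP+}\lambda(x^k,d^{k-1})$ is bounded above by $\beta_k^{PRP+}|\lambda(x^k,d^{k-1})|$, which is exactly cancelled by $c_2\lambda(x^k,\vartheta(x^k))$. The paper's proof is the same computation written out slightly more tersely in a single chain of inequalities.
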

\begin{proof}
Since $x^k$ is not $E$-Pareto critical, we obtain $\vartheta(x^k)\neq0$ by Lemma \ref{L2}(ii). The condition (\ref{4}) trivially holds for $ d^0=\vartheta(x^0)$. For any $k\geq1$, it can be deduced from (\ref{9-}) that
\begin{equation}\label{7-}
\lambda(x^k,d^k)=\lambda\bigg(x^k,\vartheta(x^k)+\beta^{PRP+}_kd^{k-1}
-\beta^{PRP+}_k\frac{|\lambda(x^k,d^{k-1})|}{\lambda(x^{k},\vartheta(x^{k}))}\vartheta(x^k)\bigg).
\end{equation}
By Lemma \ref{L1}(i) and the fact $\lambda(x^k,\vartheta(x^k))<0$ obtained from Lemma \ref{L2}(ii), we can infer from (\ref{7-}) that
$$\begin{aligned}
\lambda(x^k,d^k)&\leq\lambda(x^{k},\vartheta(x^{k}))+\beta^{PRP+}_k\lambda(x^{k},d^{k-1})
-\beta^{PRP+}_k\frac{|\lambda(x^k,d^{k-1})|}{\lambda(x^{k},\vartheta(x^{k}))}\lambda(x^k,\vartheta(x^k))\\
&\leq\lambda(x^{k},\vartheta(x^{k}))+\beta^{PRP+}_k\lambda(x^{k},d^{k-1})-\beta^{PRP+}_k|\lambda(x^k,d^{k-1})|\\
&\leq\lambda(x^{k},\vartheta(x^{k})).
\end{aligned}$$
Combining these, we obtain $\lambda(x^k,d^k)\leq\lambda(x^{k},\vartheta(x^{k}))$, which establishes the theorem.
\end{proof}
\section{Global convergence}
In this section,  we present a three-term PRP CG algorithm  for vector optimization with a generalized Wolfe line  search  procedure, without self-adjusting strategies and regular restarts,  as follows:

\noindent\rule{\linewidth}{1.5pt}
{\bf Algorithm TT-PRP:} A three-term PRP CG method for vector optimization\\
\noindent\rule{\linewidth}{1.5pt}
\noindent{\bf Step 1.} Let $0<\rho<\sigma<1$, $\mu>0$, $\xi\in E$ as in (\ref{18-}). Compute $\vartheta(x^0)$ as in (\ref{2}). If $\vartheta(x^0)=0$, then stop. Otherwise, set $d^0:=\vartheta(x^0)$ and $k\leftarrow1$.

\noindent{\bf Step 2.} Compute a step size $\alpha_{k-1}>0$ such that
\begin{subequations}\label{1*}
\begin{equation}\label{5}
\Phi(x^{k})\preceq_E \Phi(x^{k-1})+\rho\alpha_{k-1}\lambda(x^{k-1},d^{k-1})\xi,
\end{equation}
\begin{equation}\label{6}
-\mu \lambda(x^{k-1},d^{k-1})\geq\lambda(x^k,d^{k-1})\geq\sigma \lambda(x^{k-1},d^{k-1}),
\end{equation}
\end{subequations}
where
\begin{equation}
x^{k}:=x^{k-1}+\alpha_{k-1}d^{k-1}.
\end{equation}

\noindent{\bf Step 3.} Compute $\vartheta(x^{k})$ as in (\ref{2}). If $\vartheta(x^{k})=0$, then  stop.

\noindent{\bf Step 4.}
Define
\begin{equation}\label{1-}
d^{k}:=\vartheta(x^{k})+\beta_{k}d^{k-1}-\beta_{k}\frac{|\lambda(x^{k},d^{k-1})|}{\lambda(x^{k},\vartheta(x^{k}))}
\vartheta(x^{k}),
\end{equation}
where
\begin{equation}\label{12-} \beta_{k}:=\max\bigg\{0,\frac{-\lambda(x^{k},\vartheta(x^{k}))+\lambda(x^{k-1},\vartheta(x^{k}))}
{-\lambda(x^{k-1},\vartheta(x^{k-1}))}\bigg\}.
\end{equation}
\noindent{\bf Step 5.} Let $k\leftarrow k+1$, and go to {\bf Step 2}.\\[-0.5mm]
\noindent\rule{\linewidth}{1.5pt}

We need the following general hypotheses:
\begin{itemize}
\item [$(H_1)$:]  The set $V$ is finite
\item [$(H_2)$:] There exists $M>0$ such that $\|x\|\leq M$ for all  $x\in\Delta:=\{x\in \mathbf{R}^n:\Phi(x)\preceq_E \Phi(x^0)\}$.
\item [$(H_3)$:] The Jacobian $J\Phi$ is $L$-Lipschitz continuous on an open set $\Omega\supset\Delta$
\end{itemize}

In what follows, we suppose that the convergence does not occur in  finite steps, i.e., $\vartheta(x^k)\neq0$ for all $k\ge0$. By Theorem \ref{T0}, we can easily check that  the search direction  generated by the Algorithm TT-PRP invariably satisfies (\ref{4}) with $h=1$.

Note that if $d^k$ is always an $E$-descent direction of $\Phi$ at $x^k$, then it can be deduced from assumption $(H_2)$ that $\{\lambda(x^k,\vartheta(x^k))\}$ is bounded. In fact, in this case, by the boundedness of $\{x^k\}$ and the continuity conclusions of Lemma \ref{L2}, there exist $\gamma,\theta>0$  such that
\begin{equation}\label{11}
\|J\Phi(x^k)\|\leq\gamma\quad\text{and}\quad\|\vartheta(x^k)\|\leq\theta,
\end{equation}
which, along with the definitions of $\lambda$ and $V$, yields that
\begin{equation}\label{25}
\begin{aligned}&|\lambda(x^k,\vartheta(x^k))|=|\sup\{\langle J\Phi(x^k)\vartheta(x^k),v\rangle:v\in V\}|\\
&\leq \sup\{\|J\Phi(x^k)\vartheta(x^k)\|\|v\|:v\in V\}\leq \|J\Phi(x^k)\|\|\vartheta(x^k)\|\leq \gamma\theta.\end{aligned}\end{equation}
We next present the  Zoutendijk  condition for vector optimization as follows:
\begin{lemma}\label{L3}
Under assumptions $(H_1)-(H_3)$, if the Algorithm TT-PRP generates an infinite sequence $\{x^k\}$, then
$$\sum_{k\geq0}\frac{\lambda^2(x^k,d^k)}{\|d^k\|^2}<\infty.$$
\end{lemma}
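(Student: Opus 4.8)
The plan is to mimic the classical scalar Zoutendijk argument, replacing gradient inner products by values of $\lambda$ and using the generalized Wolfe conditions (\ref{1*}) in place of the ordinary Wolfe conditions. First I would extract a uniform descent estimate at each iterate. Since $d^k$ satisfies (\ref{4}) with $h=1$ by Theorem \ref{T0}, and since Lemma \ref{L2}(ii) gives $\lambda(x^k,\vartheta(x^k))<0$, each $d^k$ is an $E$-descent direction, so the remarks following (\ref{11}) apply: $\{x^k\}\subset\Delta$ is bounded by $(H_2)$ (this uses (\ref{5}), which forces $\Phi(x^{k})\preceq_E\Phi(x^0)$ since $\lambda(x^{k-1},d^{k-1})<0$ and $\xi\in E$), hence $\|J\Phi(x^k)\|\le\gamma$ and $\|\vartheta(x^k)\|\le\theta$ as in (\ref{11}).

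Next I would establish the key one-step decrease inequality. Fix $v\in V$. From (\ref{5}) we have $\langle\Phi(x^{k})-\Phi(x^{k-1}),v\rangle\le\rho\alpha_{k-1}\lambda(x^{k-1},d^{k-1})\langle\xi,v\rangle\le\rho\alpha_{k-1}\lambda(x^{k-1},d^{k-1})$, using (\ref{18-}) and $\lambda(x^{k-1},d^{k-1})<0$. Taking, for each $k$, an index $v=v_{j(k)}\in V$ that tracks a fixed component (here $(H_1)$, finiteness of $V$, lets me pass to a subsequence on which the same $v\in V$ is used infinitely often, or alternatively sum componentwise over the finitely many $v\in V$), the sequence $\langle\Phi(x^k),v\rangle$ is nonincreasing and, being bounded below by continuity of $\Phi$ on the compact set $\Delta$, convergent. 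Summing the telescoping inequality gives
$$\sum_{k\ge1}\big(-\alpha_{k-1}\lambda(x^{k-1},d^{k-1})\big)<\infty,$$
i.e. $\sum_{k\ge0}\alpha_k\,|\lambda(x^k,d^k)|<\infty$.

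It then remains to convert this into the stated series. The curvature part of (\ref{6}) reads $\lambda(x^k,d^{k-1})\ge\sigma\lambda(x^{k-1},d^{k-1})$, hence $\lambda(x^k,d^{k-1})-\lambda(x^{k-1},d^{k-1})\ge(\sigma-1)\lambda(x^{k-1},d^{k-1})=(1-\sigma)|\lambda(x^{k-1},d^{k-1})|$. On the other hand, by Lemma \ref{L1}(iii) the map $\lambda(\cdot,d^{k-1})$ is $L\|d^{k-1}\|$-Lipschitz on $\Omega$ (using $(H_3)$), so $\lambda(x^k,d^{k-1})-\lambda(x^{k-1},d^{k-1})\le L\|d^{k-1}\|\,\|x^k-x^{k-1}\|=L\alpha_{k-1}\|d^{k-1}\|^2$. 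Combining the two bounds yields
$$\alpha_{k-1}\ge\frac{1-\sigma}{L}\cdot\frac{|\lambda(x^{k-1},d^{k-1})|}{\|d^{k-1}\|^2}.$$
Substituting this lower bound on $\alpha_{k-1}$ into $\sum_k\alpha_{k-1}|\lambda(x^{k-1},d^{k-1})|<\infty$ gives $\sum_{k\ge0}\lambda^2(x^k,d^k)/\|d^k\|^2<\infty$, as desired. (One should note $\|d^{k-1}\|\ne0$: since $x^{k-1}$ is not $E$-Pareto critical, $\lambda(x^{k-1},d^{k-1})<0$, so $d^{k-1}\ne0$.)

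The main obstacle I anticipate is the very first step: turning the vector inequality (\ref{5}) into a single scalar telescoping sum whose terms are summable. In the scalar case this is immediate; here the cone-order decrease (\ref{5}) must be paired with a fixed "test functional" $v\in V$, and one must argue that the resulting scalar sequence is bounded below — which is where $(H_1)$ (finiteness of $V$) and $(H_2)$ (boundedness of $\Delta$, hence of $\Phi$ on $\Delta$ by continuity) are genuinely used. The second, more routine, obstacle is checking that the upper Wolfe bound $-\mu\lambda(x^{k-1},d^{k-1})\ge\lambda(x^k,d^{k-1})$ in (\ref{6}) does not interfere — it does not, since only the lower inequality $\lambda(x^k,d^{k-1})\ge\sigma\lambda(x^{k-1},d^{k-1})$ is needed for the curvature step, exactly as in the standard Wolfe argument.
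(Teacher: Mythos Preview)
Your argument is sound and is precisely the content of \cite[Proposition~3.3]{Luc}, which is all the paper's own proof invokes. Two small slips worth noting. First, the chain $\rho\alpha_{k-1}\lambda(x^{k-1},d^{k-1})\langle\xi,v\rangle\le\rho\alpha_{k-1}\lambda(x^{k-1},d^{k-1})$ points the wrong way: a negative number multiplied by something in $(0,1]$ becomes \emph{larger}, not smaller. This is harmless for the argument, since you only need the first inequality $\langle\Phi(x^{k})-\Phi(x^{k-1}),v\rangle\le\rho\alpha_{k-1}\lambda(x^{k-1},d^{k-1})\langle\xi,v\rangle$ together with the fact that $\langle\xi,v\rangle>0$ is a fixed positive constant; it gets absorbed into the final bound. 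Second, the detour through ``for each $k$, an index $v=v_{j(k)}$'' and the appeal to $(H_1)$ are unnecessary here: inequality~(\ref{5}) is a cone-order inequality, so after pairing with any \emph{single} fixed $v\in V$ it holds for every $k$, and you may telescope $\langle\Phi(x^k),v\rangle$ directly (boundedness below coming from $(H_2)$ and continuity of $\Phi$). The curvature step and the Lipschitz bound via Lemma~\ref{L1}(iii) are exactly right.
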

\begin{proof}
Clearly, $d^k\neq0$ and then $\lambda^2(x^k,d^k)/\|d^k\|^2$ is well-defined. Since $\alpha_k$ satisfies (\ref{2*}) for all $k\ge0$,   we can use \cite[Proposition 3.3]{Luc}  to obtain the desired conclusion.
\end{proof}

We will  mainly proceed to show that $\vartheta(x^k)$ cannot be bounded away from zero. To do this,  we need to make the following contradictory assumption:

$(H_4)$: There exists $\eta>0$ such that
\begin{equation*}
\|\vartheta(x^k)\|\geq\eta,\qquad \forall k\ge0.
\end{equation*}
Based on the contradictory assumption, we claim the following  auxiliary lemmas.

\begin{lemma}\label{L4}
Assume that assumptions $(H_1)-(H_4)$ hold, if the Algorithm TT-PRP generates an infinite sequence $\{x^k\}$, then
$$\sum_{k\ge0}\frac{1}{\|d^k\|^2}<\infty.$$
\end{lemma}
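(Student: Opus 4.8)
The plan is to combine the sufficient descent property from Theorem \ref{T0} with the Zoutendijk-type condition of Lemma \ref{L3}. By Theorem \ref{T0}, the direction $d^k$ satisfies $\lambda(x^k,d^k)\le\lambda(x^k,\vartheta(x^k))<0$, and since $\lambda(x^k,\vartheta(x^k))<0$ this gives $\lambda^2(x^k,d^k)\ge\lambda^2(x^k,\vartheta(x^k))$. Under the contradictory assumption $(H_4)$, together with Lemma \ref{L2}(ii) which yields $\lambda(x^k,\vartheta(x^k))<-\tfrac12\|\vartheta(x^k)\|^2$, we obtain $|\lambda(x^k,\vartheta(x^k))|\ge\tfrac12\|\vartheta(x^k)\|^2\ge\tfrac12\eta^2$. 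Hence $\lambda^2(x^k,d^k)\ge\tfrac14\eta^4$ for all $k$, so each term of the Zoutendijk sum is bounded below by a positive multiple of $1/\|d^k\|^2$:
\begin{equation*}
\frac{\eta^4}{4}\sum_{k\ge0}\frac{1}{\|d^k\|^2}\le\sum_{k\ge0}\frac{\lambda^2(x^k,d^k)}{\|d^k\|^2}<\infty,
\end{equation*}
where the last inequality is exactly Lemma \ref{L3}. Dividing through by $\eta^4/4>0$ yields the claimed convergence of $\sum_{k\ge0}1/\|d^k\|^2$.

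First I would record that assumptions $(H_1)$–$(H_3)$ guarantee the hypotheses of Lemma \ref{L3} are met, so the Zoutendijk condition holds; this is where the generalized Wolfe line search \eqref{1*} enters, through the cited \cite[Proposition 3.3]{Luc}. Next I would invoke Theorem \ref{T0} to get $\lambda(x^k,d^k)\le\lambda(x^k,\vartheta(x^k))$, then use Lemma \ref{L2}(ii) to pass from $\lambda(x^k,\vartheta(x^k))$ to $\|\vartheta(x^k)\|^2$, and finally apply $(H_4)$ to bound $\|\vartheta(x^k)\|$ away from zero. Stringing these three estimates together and substituting into the Zoutendijk sum is the entire argument.

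There is essentially no hard step here: the lemma is a direct corollary of Lemma \ref{L3} once the uniform lower bound $\lambda^2(x^k,d^k)\ge\eta^4/4$ is in place. The only point requiring a moment's care is the chain of inequalities translating the sufficient descent bound \eqref{4} into a quadratic lower bound in $\|\vartheta(x^k)\|$ — one must use both $\lambda(x^k,\vartheta(x^k))<0$ (so that squaring reverses the inequality correctly) and the strict bound $\lambda(x^k,\vartheta(x^k))<-\|\vartheta(x^k)\|^2/2$ from Lemma \ref{L2}(ii), rather than a crude estimate. Everything else is bookkeeping.
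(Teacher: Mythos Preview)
Your proposal is correct and follows essentially the same route as the paper's proof: combine Theorem~\ref{T0}, Lemma~\ref{L2}(ii), and assumption $(H_4)$ to obtain the uniform lower bound $\lambda^2(x^k,d^k)\ge\eta^4/4$, then feed this into the Zoutendijk condition of Lemma~\ref{L3}. The paper writes the chain of inequalities in one line, but the ingredients and their order are identical to yours.
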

\begin{proof}
 By assumption $(H_4)$, Lemma \ref{L2}(ii) and Theorem \ref{T0}, we have
$$\frac{\eta^4}{4\|d^k\|^2}\leq\frac{\|\vartheta(x^k)\|^4}{4\|d^k\|^2}
\leq\frac{\lambda^2(x^k,\vartheta(x^k))}{\|d^k\|^2}\leq\frac{\lambda^2(x^k,d^k)}{\|d^k\|^2}$$
for all $k\geq0$, which, using Lemma \ref{L3}, yields that
$$ \frac{{\eta}^4}{4}\sum_{k\ge0}\frac{1}{\|d^k\|^2}<\infty.$$
\end{proof}

\begin{lemma}\label{L5}
Under assumptions $(H_1)-(H_4)$, if the Algorithm TT-PRP generates an infinite sequence $\{x^k\}$, then
$$\sum_{k\ge1}\|\varrho^k-\varrho^{k-1}\|^2<\infty,$$
where $\varrho^k:=d^k/\|d^k\|$.
\end{lemma}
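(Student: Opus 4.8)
This is the classical "unit-direction-vectors form a convergent series of increments" lemma from Gilbert–Nocedal-type convergence proofs, adapted to the vector-optimization setting. The standard trick is to split the recursion for $d^k$ into a uniformly bounded part and a scalar-multiple-of-$\varrho^{k-1}$ part, then exploit that $\varrho^k$ and $\varrho^{k-1}$ are both unit vectors so that the "radial" contribution drops out. I would set $\varrho^k = d^k/\|d^k\|$ and, for $k\ge 1$, write the defining relation \eqref{1-} as $d^k = r^k + \beta_k d^{k-1}$, where
\[
r^k := \vartheta(x^k) - \beta_k\frac{|\lambda(x^k,d^{k-1})|}{\lambda(x^k,\vartheta(x^k))}\vartheta(x^k)
= \Bigl(1 - \beta_k\frac{|\lambda(x^k,d^{k-1})|}{\lambda(x^k,\vartheta(x^k))}\Bigr)\vartheta(x^k).
\]
Dividing by $\|d^k\|$ gives $\varrho^k = \hat r^k + \tau_k\,\varrho^{k-1}$ with $\hat r^k := r^k/\|d^k\|$ and $\tau_k := \beta_k\|d^{k-1}\|/\|d^k\| \ge 0$. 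Then I would use the elementary identity (valid for unit vectors $\varrho^k,\varrho^{k-1}$ and $\tau_k\ge 0$)
\[
\|\varrho^k - \varrho^{k-1}\| \le \|\hat r^k\| + |\,1-\tau_k\,| = \|\hat r^k\| + |\,\|\varrho^k - \hat r^k\| - \|\varrho^{k-1}\|\,| \le 2\|\hat r^k\|,
\]
where the middle equality uses $\|\tau_k\varrho^{k-1}\| = \tau_k = \|\varrho^k - \hat r^k\|$ and $\|\varrho^{k-1}\|=1$, and the last inequality is the reverse triangle inequality together with $\|\varrho^k\|=1$. Hence $\|\varrho^k-\varrho^{k-1}\|^2 \le 4\|\hat r^k\|^2 = 4\|r^k\|^2/\|d^k\|^2$.

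**Key steps.** It then remains to bound $\|r^k\|$ by a constant. From the formula above, $\|r^k\| \le \bigl(1 + \beta_k\,|\lambda(x^k,d^{k-1})|/|\lambda(x^k,\vartheta(x^k))|\bigr)\|\vartheta(x^k)\|$. For the $\|\vartheta(x^k)\|$ factor I invoke \eqref{11}, i.e. $\|\vartheta(x^k)\|\le\theta$. For the bracketed factor I need to show $\beta_k$ and the ratio $|\lambda(x^k,d^{k-1})|/|\lambda(x^k,\vartheta(x^k))|$ are uniformly bounded. The ratio is controlled by the Wolfe-type condition \eqref{6}: since $|\lambda(x^k,d^{k-1})| \le \mu\,|\lambda(x^{k-1},d^{k-1})|$ (from the left inequality of \eqref{6}, using $\lambda(x^{k-1},d^{k-1})<0$ and $\mu>0$) and, by Theorem~\ref{T0} combined with Lemma~\ref{L2}(ii), $|\lambda(x^{k-1},d^{k-1})| \ge |\lambda(x^{k-1},\vartheta(x^{k-1}))| \ge \tfrac12\|\vartheta(x^{k-1})\|^2 \ge \tfrac12\eta^2$ (assumption $(H_4)$), while $|\lambda(x^k,\vartheta(x^k))| \ge \tfrac12\|\vartheta(x^k)\|^2 \ge \tfrac12\eta^2$ — so both $|\lambda(x^k,d^{k-1})|/|\lambda(x^k,\vartheta(x^k))|$ and, after plugging into \eqref{12-}, $\beta_k$ itself are bounded in terms of $\theta,\gamma,\eta,\mu$ (using \eqref{25} for the numerators of $\beta_k$ and of the ratio). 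Thus $\|r^k\| \le c$ for some constant $c>0$ depending only on the problem data, and therefore
\[
\sum_{k\ge1}\|\varrho^k - \varrho^{k-1}\|^2 \le 4c^2 \sum_{k\ge1}\frac{1}{\|d^k\|^2} < \infty
\]
by Lemma~\ref{L4}. This completes the argument.

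**Main obstacle.** The routine-looking but genuinely load-bearing step is the uniform bound on $\beta_k$ and on the coefficient $\beta_k|\lambda(x^k,d^{k-1})|/|\lambda(x^k,\vartheta(x^k))|$; everything hinges on the contradictory assumption $(H_4)$ (which keeps the denominators $|\lambda(x^{k-1},\vartheta(x^{k-1}))|$ and $|\lambda(x^k,\vartheta(x^k))|$ bounded away from $0$) and on the new Wolfe condition \eqref{6} (whose \emph{left} inequality $-\mu\lambda(x^{k-1},d^{k-1})\ge\lambda(x^k,d^{k-1})$ is exactly what controls $|\lambda(x^k,d^{k-1})|$ from above — this is where the generalized line search earns its keep, compared with the classical Wolfe condition which only bounds it below). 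I would be careful to track the signs: $\lambda(x^{k-1},d^{k-1})<0$, so $|\lambda(x^k,d^{k-1})|\le\mu|\lambda(x^{k-1},d^{k-1})|$ requires the right inequality of \eqref{6} to also keep $\lambda(x^k,d^{k-1})\ge\sigma\lambda(x^{k-1},d^{k-1})$, ensuring $\lambda(x^k,d^{k-1})$ stays within $[\sigma\lambda(x^{k-1},d^{k-1}),\,-\mu\lambda(x^{k-1},d^{k-1})]$ and hence $|\lambda(x^k,d^{k-1})|\le\max\{\sigma,\mu\}\,|\lambda(x^{k-1},d^{k-1})|$. The numerators in \eqref{12-} and in the ratio are handled uniformly by \eqref{25} (Cauchy–Schwarz plus \eqref{11}). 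Beyond that, the decomposition $\varrho^k = \hat r^k + \tau_k\varrho^{k-1}$ and the two-line unit-vector estimate are the only real ideas, and both are standard.
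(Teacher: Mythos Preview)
Your decomposition $d^k = r^k + \beta_k d^{k-1}$, the definition $\hat r^k = r^k/\|d^k\|$, and the unit-vector estimate $\|\varrho^k-\varrho^{k-1}\|\le 2\|\hat r^k\|$ are exactly what the paper does (with $\varpi^k$ in place of your $\hat r^k$). The gap is in your claim that $\|r^k\|\le c$ for a constant $c$. You try to bound the coefficient $\beta_k\,|\lambda(x^k,d^{k-1})|/|\lambda(x^k,\vartheta(x^k))|$ uniformly, but the numerator $|\lambda(x^k,d^{k-1})|$ is \emph{not} controlled by \eqref{25}: that estimate applies only to $\lambda$-values in the direction $\vartheta(\cdot)$, and by Cauchy--Schwarz one only gets $|\lambda(x^k,d^{k-1})|\le\gamma\|d^{k-1}\|$. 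Since Lemma~\ref{L4} forces $\|d^{k-1}\|\to\infty$, this term is unbounded, and so is $\|r^k\|$. (The lower bound $|\lambda(x^{k-1},d^{k-1})|\ge\eta^2/2$ that you quote is in the wrong direction to help.)

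The paper does not attempt to bound $\|r^k\|$ by a constant. Instead it bounds $\|\hat r^k\|^2=\|r^k\|^2/\|d^k\|^2$ term by term: the $\|\vartheta(x^k)\|^2/\|d^k\|^2$ part is summable by Lemma~\ref{L4}, while the second part is estimated, via \eqref{6}, by a constant times $\lambda^2(x^{k-1},d^{k-1})/\|d^k\|^2$. To convert the denominator to $\|d^{k-1}\|^2$ and thereby invoke Zoutendijk (Lemma~\ref{L3}), the paper first proves the nontrivial inequality $\|d^k\|\ge\varepsilon\|d^{k-1}\|$ for some fixed $\varepsilon>0$. This is obtained by writing $\|d^k\|=\nu_k\|d^{k-1}\|$ with $\nu_k$ bounded above (continuity plus boundedness of $1/\|d^{k-1}\|$ from Lemma~\ref{L4}), and then observing that $\sum 1/\|d^k\|^2<\infty$ together with $\|d^k\|=\|d^0\|\prod_{i\le k}\nu_i$ forces $\nu_k$ to be bounded away from zero. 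That step---linking $\|d^k\|$ to $\|d^{k-1}\|$---is the missing ingredient in your argument, and without it the proof does not close.
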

\begin{proof}
Since (\ref{1-}) and the definition of $\lambda$, we have
$$\begin{aligned}
d^k=\|d^{k-1}\|\bigg(\frac{\vartheta(x^k)}{\|d^{k-1}\|}+\beta_k \varrho^{k-1}-\beta_k\frac{|\lambda(x^k,\varrho^{k-1})|}{\lambda(x^{k},\vartheta(x^{k}))}
\vartheta(x^k)\bigg).
\end{aligned}$$
From this, we can assume that
\begin{equation}\label{12}
\|d^k\|=\nu_k(x^k)\|d^{k-1}\|
\end{equation}
for all $k\geq1$, where
$$\nu_k(x^k):=\bigg\|\frac{\vartheta(x^k)}{\|d^{k-1}\|}+\beta_k \varrho^{k-1}-\beta_k\frac{|\lambda(x^k,\varrho^{k-1})|}{\lambda(x^{k},\vartheta(x^{k}))}
\vartheta(x^k)\bigg\|.$$
According to $\|\varrho^{k-1}\|=1$, the boundedness of $\{x^k\}$, the boundedness of $\{1/\|d^{k-1}\|\}$ obtained from Lemma \ref{L4}, and the continuity conclusions of Lemmas \ref{L1} and \ref{L2}, we deduce that $\{\nu_k(x^k)\}$ is bounded. By induction, it follows from (\ref{12}) that
$$\|d^k\|=\|d^{0}\|\prod^k_{i=1}\nu_i(x^i), $$
which leads to
\begin{equation}\label{G1}
\frac{1}{\|d^k\|^2}=\frac{1}{\|d^{0}\|^2\prod^k_{i=1}\nu^2_i(x^i)}.
\end{equation}
Combining  Lemma \ref{L4} with the boundedness of $\{\nu_k(x^k)\}$, we can infer from (\ref{G1}) that
there exists $\varepsilon>0$ such that
$$\nu_k(x^k)\geq\varepsilon,\qquad\forall k\ge1. $$
Form this, (\ref{12}) shows that
\begin{equation}\label{G2}
\|d^k\|\ge\varepsilon\|d^{k-1}\|,\qquad\forall k\ge1.
\end{equation}
Let
$$\varpi^k:=\frac{1}{\|d^{k}\|}\bigg(\vartheta(x^k)-\beta_k\frac{|\lambda(x^k,d^{k-1})|}{\lambda(x^{k},\vartheta(x^{k}))}
\vartheta(x^k)\bigg).$$
From (\ref{1-}), we have
\begin{equation}\label{13}
\varrho^k=\varpi^k+\beta_k\frac{\|d^{k-1}\|}{\|d^{k}\|}\varrho^{k-1}.
\end{equation}
Using the fact $\|\varrho^k\|=\|\varrho^{k-1}\|=1$, we deduce from (\ref{13}) that
$$\|\varpi^k\|=\bigg\|\varrho^k-\beta_k\frac{\|d^{k-1}\|}{\|d^{k}\|}\varrho^{k-1}\bigg\|
=\bigg\|\beta_k\frac{\|d^{k-1}\|}{\|d^{k}\|}\varrho^k-\varrho^{k-1}\bigg\|,$$
which, combined with $\beta_k\ge0$, gives that
\begin{equation}\label{14}
\begin{aligned}
\|\varrho^k-\varrho^{k-1}\|&\leq\bigg(\beta_k\frac{\|d^{k-1}\|}{\|d^{k}\|}+1\bigg)\|\varrho^k-\varrho^{k-1}\|\\
&=\bigg\|\varrho^k-\beta_k\frac{\|d^{k-1}\|}{\|d^k\|}\varrho^{k-1}+\beta_k\frac{\|d^{k-1}\|}{\|d^k\|}\varrho^k-\varrho^{k-1}\bigg\|\\
&\leq\bigg\|\beta_k\frac{\|d^{k-1}\|}{\|d^{k}\|}\varrho^k-\varrho^{k-1}\bigg\|
+\bigg\|\varrho^k-\beta_k\frac{\|d^{k-1}\|}{\|d^{k}\|}\varrho^{k-1}\bigg\|\\
&\leq2\|\varpi^k\|.
\end{aligned}\end{equation}
From Lemma \ref{L2}(ii) and  assumption $(H_4)$, we have
\begin{equation}\label{15-}
|\lambda(x^{k},\vartheta(x^{k}))|>\frac{\|\vartheta(x^k)\|^2}{2}\geq\frac{\eta^2}{2}.
\end{equation}
By (\ref{6}), (\ref{11}), (\ref{G2}) and (\ref{15-}), we obtain from (\ref{14}) the following estimate
$$\begin{aligned}
&\sum_{k\ge1}\|\varrho^{k}-\varrho^{k-1}\|^2\\
&\leq\sum_{k\ge1}\frac{4}{\|d^{k}\|^2}\bigg\|\vartheta(x^k)-\beta_k\frac{|\lambda(x^k,d^{k-1})|}
{\lambda(x^{k},\vartheta(x^{k}))}
\vartheta(x^k)\bigg\|^2\\
&\leq\sum_{k\ge1}\frac{8\|\vartheta(x^{k})\|^2}{\|d^k\|^2}
+\sum_{k\ge1}\frac{8\beta^2_k\lambda^2(x^{k},d^{k-1})\|\vartheta(x^{k})\|^2}{\lambda^2(x^{k},\vartheta(x^{k}))\|d^k\|^2}\\
&\leq\sum_{k\ge1}\frac{8\|\vartheta(x^{k})\|^2}{\|d^{k}\|^2}
+\sum_{k\ge1}\frac{8\beta^2_k\max\{\mu^2,\sigma^2\}\|\vartheta(x^{k})\|^2\lambda^2(x^{k-1},d^{k-1})}{\varepsilon^2\lambda^2(x^{k},\vartheta(x^{k}))\|d^{k-1}\|^2}\\
&\leq\sum_{k\ge1}\frac{8\theta^2}{\|d^{k}\|^2}
+\sum_{k\ge1}\frac{32\beta^2_k\max\{\mu^2,\sigma^2\}\theta^2\lambda^2(x^{k-1},d^{k-1})}{\varepsilon^2\eta^4\|d^{k-1}\|^2}\\
&<\infty.
\end{aligned}$$
Here, we use Lemmas \ref{L3} and \ref{L4}, and the proof is complete.
\end{proof}
We now proceed to establish the global convergence of the Algorithm TT-PRP.
\begin{theorem}\label{T2}
Suppose that assumptions $(H_1)-(H_3)$ hold, if the Algorithm TT-PRP generates an infinite sequence $\{x^k\}$, then
$$ \liminf_{k\to\infty}\|\vartheta(x^k)\|=0.$$
\end{theorem}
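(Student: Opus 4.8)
The plan is to argue by contradiction. Suppose the conclusion fails, i.e.\ there is $\eta>0$ with $\|\vartheta(x^k)\|\ge\eta$ for all $k$; this is precisely the contradictory assumption $(H_4)$, so Lemmas~\ref{L4} and~\ref{L5} are available, and in particular $\sum_{k\ge0}\|d^k\|^{-2}<\infty$. I will contradict this by exhibiting an infinite set of indices along which $\|d^k\|$ stays bounded: the corresponding sub-sum of $\sum_k\|d^k\|^{-2}$ is then infinite, and a fortiori so is the full sum, contradicting Lemma~\ref{L4}.

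First I would record the bounded quantities. Since $d^k$ is always an $E$-descent direction (Theorem~\ref{T0} and Lemma~\ref{L2}(ii)), (\ref{5}) telescopes to $\Phi(x^k)\preceq_E\Phi(x^0)$, so $x^k\in\Delta$; by $(H_2)$ and (\ref{11}), $\|x^k\|\le M$, $\|J\Phi(x^k)\|\le\gamma$, $\|\vartheta(x^k)\|\le\theta$, and hence $\|s^{k-1}\|:=\|x^k-x^{k-1}\|\le2M$. Next I would prove a Property $(*)$-type bound on $\beta_k$: bounding the numerator of $\beta_k$ through the Lipschitz estimate of Lemma~\ref{L1}(iii) as $\lambda(x^{k-1},\vartheta(x^k))\le\lambda(x^k,\vartheta(x^k))+L\theta\|s^{k-1}\|$, and bounding its denominator below by $\eta^2/2$ via Lemma~\ref{L2}(ii) and $(H_4)$, one obtains $0\le\beta_k\le(2L\theta/\eta^2)\|s^{k-1}\|\le\bar\beta:=4LM\theta/\eta^2$; in particular $\beta_k$ is as small as we please once $\|s^{k-1}\|$ is small.

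The core is then the classical window argument. Fix a window length $\Delta$. Writing $x^l-x^{k-1}=\sum_{i=k}^l\|s^{i-1}\|\varrho^{i-1}$, using $\|\varrho^{k-1}\|=1$ and estimating $\|\varrho^{i-1}-\varrho^{k-1}\|$ by Cauchy--Schwarz via Lemma~\ref{L5}, one gets $\|\varrho^{i-1}-\varrho^{k-1}\|<1/2$ throughout any window $[k,k+\Delta-1]$ with $k$ large, whence $\|x^l-x^{k-1}\|\ge\frac12\sum_{i=k}^l\|s^{i-1}\|$; together with $\|x^l-x^{k-1}\|\le2M$ this forces $\sum_{i=k}^{k+\Delta-1}\|s^{i-1}\|\le4M$, so the number $N$ of ``large'' steps ($\|s^{i-1}\|>\bar\lambda$) inside each window is at most $4M/\bar\lambda$, independently of $\Delta$. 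On the other hand, from (\ref{1-}) one has $\|d^l\|\le\theta+c\,\beta_l\,\|d^{l-1}\|$ with $c:=1+2\gamma\theta/\eta^2$ (bounding $|\lambda(x^l,d^{l-1})|\le\gamma\|d^{l-1}\|$ and $|\lambda(x^l,\vartheta(x^l))|\ge\eta^2/2$). Unrolling this recursion over $\Delta$ consecutive indices, the coefficient of $\|d^{k-1}\|$ is $\prod_{i=k}^{k+\Delta-1}(c\beta_i)$, where every factor is $\le c\bar\beta$ and --- choosing $\bar\lambda$ so that a small step forces $c\beta_i\le1/2$ --- at most $N$ of the $\Delta$ factors exceed $1/2$; hence this coefficient is $\le(2c\bar\beta)^N2^{-\Delta}$, while the additive part of the unrolled bound is at most a constant depending only on $\theta,c,\bar\beta,N$. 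Choosing $\Delta$ so large that $(2c\bar\beta)^N2^{-\Delta}\le1/2$ (and $\Delta>8M/\bar\lambda$), and then $k\ge K_0$ large enough for the window estimate, yields $\|d^{k+\Delta-1}\|\le C+\frac12\|d^{k-1}\|$ for all such $k$; iterating along the arithmetic progression $k=K_0-1+n\Delta$ shows $\{\|d^{K_0-1+n\Delta}\|\}_{n\ge1}$ is bounded, which is the desired contradiction.

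I expect the delicate point to be this last step --- making the telescoped bound on $\|d^k\|$ effective. What makes it work is that the count $N$ of large steps per window is controlled by the fixed total-step-length bound $4M$ (obtained from Lemma~\ref{L5} and $(H_2)$) and does not grow with $\Delta$, so enlarging the window lets the geometric factor $2^{-\Delta}$ dominate $(2c\bar\beta)^N$; the constants $\bar\lambda$, then $N$ and $\bar\beta$, then $\Delta$, then $K_0$ must be fixed in exactly this non-circular order. The extra three-term correction in (\ref{1-}) relative to the plain PRP recursion enters only through the constant $c$ and causes no further difficulty.
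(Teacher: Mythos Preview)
Your proof is correct and follows essentially the same Gilbert--Nocedal window argument as the paper: assume $(H_4)$, use Lemma~\ref{L5} to force $\sum\|s^{i-1}\|\le 4M$ over each window, derive a one-step recursion of the form $\|d^k\|\le\theta+C\|s^{k-1}\|\,\|d^{k-1}\|$ from (\ref{1-}), and unroll it to show that $\|d^k\|$ stays bounded along an infinite set of indices, contradicting Lemma~\ref{L4}. The only noteworthy difference is tactical: the paper controls the unrolled product $\prod_j C\|s^{j-1}\|$ via the AM--GM inequality (and uses the generalized Wolfe condition~(\ref{6}) to bound the correction term through (\ref{16})), whereas you split steps into ``large'' and ``small'' and count --- using only the cruder estimate $|\lambda(x^k,d^{k-1})|\le\gamma\|d^{k-1}\|$ --- which is precisely the original Gilbert--Nocedal bookkeeping; both devices lead to the same contradiction.
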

\begin{proof}
Suppose, contrary to our claim, that assumption $(H_4)$ holds. By  Lemma \ref{L1}(iii), (\ref{11}) and (\ref{15-}), it follows from (\ref{12-}) that
\begin{equation}\label{15}\begin{aligned}
|\beta_{k}|&=\frac{|-\lambda(x^{k},\vartheta(x^{k}))+\lambda(x^{k-1},\vartheta(x^{k}))|}
{|\lambda(x^{k-1},\vartheta(x^{k-1}))|}\\
&\leq\frac{L\|\vartheta(x^k)\|\|x^{k}-x^{k-1}\|}
{|\lambda(x^{k-1},\vartheta(x^{k-1}))|}\\
&\leq\frac{2L\theta}
{{\eta}^2}\|x^{k}-x^{k-1}\|.
\end{aligned}\end{equation}
Moreover, it can be inferred from  (\ref{25}) and (\ref{15-})  that
\begin{equation}\label{16-}
|\beta_{k}|\leq\frac{|\lambda(x^{k},\vartheta(x^{k}))|+|\lambda(x^{k-1},\vartheta(x^{k}))|}
{|\lambda(x^{k-1},\vartheta(x^{k-1}))|}\leq\frac{4\gamma\theta}{\eta^2}.
\end{equation}
If $ \lambda(x^k,d^{k-1})\ge0$,  we  have
\begin{equation*}\label{13-}|\lambda(x^k,d^{k-1})|\leq |\lambda(x^{k},d^{k-1})-\lambda(x^{k-1},d^{k-1})|.
\end{equation*}
If $ \lambda(x^k,d^{k-1})\leq0$,  we deduce from (\ref{6}) that
$$|\lambda(x^k,d^{k-1})|\leq\sigma|\lambda(x^{k-1},d^{k-1})|,$$
which leads to
\begin{equation*}\label{14-}\begin{aligned}|\lambda(x^k,d^{k-1})|
&\leq\frac{\sigma}{1-\sigma}(|\lambda(x^{k-1},d^{k-1})|-|\lambda(x^{k},d^{k-1})|)\\
&\leq\frac{\sigma}{1-\sigma}|\lambda(x^{k},d^{k-1})-\lambda(x^{k-1},d^{k-1})|.
\end{aligned}\end{equation*}
Combining these and using Lemma \ref{L1}(iii), we see that
\begin{equation}\label{16}\begin{aligned}|\lambda(x^k,d^{k-1})|
&\leq C_1|\lambda(x^{k},d^{k-1})-\lambda(x^{k-1},d^{k-1})|\\
&\leq C_1L\|x^{k}-x^{k-1}\|\|d^{k-1}\|,\end{aligned}\end{equation}
where $C_1:\max\{1,\sigma/(1-\sigma)\}$.  Combining (\ref{11}) with (\ref{15-})-(\ref{16}), we infer from (\ref{1-}) that
\begin{equation}\label{4-}\begin{aligned}
\|d^k\|^2&\leq\bigg(\|\vartheta(x^k)\|+\beta_k\|d^{k-1}\|+
\frac{\beta_k|\lambda(x^k,d^{k-1})|\|\vartheta(x^{k})\|
}{|\lambda(x^{k},\vartheta(x^{k}))|}\bigg)^2\\
&\leq\bigg(\theta+\frac{2L\theta}
{{\eta}^2}\|x^{k}-x^{k-1}\|\|d^{k-1}\|+\frac{8C_1L\gamma\theta^2}{\eta^4}\|x^{k}-x^{k-1}\|\|d^{k-1}\|\bigg)^2\\
&\leq\bigg(\theta+\frac{2L\eta^2+8C_1L\gamma\theta^2}{\eta^4}\|x^{k}-x^{k-1}\|\|d^{k-1}\|\bigg)^2\\
&\leq2{\theta}^2+2{C}^2
\|x^{k}-x^{k-1}\|^2
\|d^{k-1}\|^2
\end{aligned}\end{equation}
for all $k\ge1$, where $C:=(2L\theta{\eta}^2+8C_1L\gamma\theta^2)/{\eta}^4$.

For any positive integer $N\ge\lceil8CM\rceil$,  where $\lceil\cdot\rceil$ is the  ceiling function, by Lemma \ref{L5}, there exists a positive integer $m_0$ such that
\begin{equation}\label{19}
\sum_{i\geq m_0+1}\|\varrho^i-\varrho^{i-1}\|^2\leq\frac{1}{4N}.
\end{equation}
By the definition of $\varrho^k$, we deduce that
\begin{equation*}\label{17}\begin{aligned}
x^m-x^{m_0}&=\sum^m_{i=m_0+1}(x^{i}-x^{i-1})=\sum^m_{i=m_0+1}\|s^{i-1}\|\varrho^{i-1}\\
&=\sum^m_{i=m_0+1}\|s^{i-1}\|\varrho^{m_0}+\sum^m_{i=m_0+1}\|s^{i-1}\|(\varrho^{i-1}-\varrho^{m_0})
\end{aligned}\end{equation*}
for all $m\ge m_0+1$, where $s^{i-1}:=x^i-x^{i-1}$. From this, we get
\begin{equation}\label{18}\begin{aligned}
\sum^m_{i=m_0+1}\|s^{i-1}\|&\leq\|x^m-x^{m_0}\|+\sum^m_{i=m_0+1}\|s^{i-1}\|\|\varrho^{i-1}-\varrho^{m_0}\|\\
&\leq2M+\sum^m_{i=m_0+1}\|s^{i-1}\|\|\varrho^{i-1}-\varrho^{m_0}\|.
\end{aligned}\end{equation}
Since the arbitrariness of $N$  and the fact $(\sum^n_{i=1}a_i)^2\leq n\sum^n_{i=1}a_i^2 $ with $a_i\in\mathbf{R}$, it can be inferred from (\ref{19}) that
$$\begin{aligned}\|\varrho^m-\varrho^{m_0}\|\leq\sum^{m}_{i=m_0+1}\|\varrho^i-\varrho^{i-1}\|\leq\sqrt{m-m_0}\bigg(\sum^{m}_{i=m_0+1}\|\varrho^i-\varrho^{i-1}\|^2\bigg)^{\frac{1}{2}}
\leq\sqrt{N}\bigg(\frac{1}{4N}\bigg)^{\frac{1}{2}}=\frac{1}{2}\end{aligned}$$
for all $m\ge m_0+1$ which, combined with (\ref{18}), means that
\begin{equation}\label{20}
\sum^{m}_{i=m_0+1}\|s^{i-1}\|\leq4M.
\end{equation}
It follows from (\ref{4-}) by induction that
\begin{equation}\label{21}
\|d^m\|^2\leq2{\theta}^2\bigg(\sum^{m+1}_{i=m_0+1}\prod^{m}_{j=i}2C^2\|s^{j-1}\|^2\bigg)
+\|d^{m_0}\|^2\prod^{m}_{j=m_0+1}2{C}^2\|s^{j-1}\|^2
\end{equation}
for all $m\ge m_0+1$. Here, $ \prod=1$ when the index range is empty. Using the  mean inequality and (\ref{20}), we see that
\begin{equation}\label{22}\begin{aligned} &\prod^{m_0+p}_{j=m_0+1}2{C}^2\|s^{j-1}\|^2=\bigg(\prod^{m_0+p}_{j=m_0+1}\sqrt{2}C\|s^{j-1}\|\bigg)^2\\
&\leq\bigg(\frac{\sum^{m_0+p}_{j=m_0+1}\sqrt{2}C\|s^{j-1}\|}{p}\bigg)^{2p}
\leq\bigg(\frac{4\sqrt{2}CM}{p}\bigg)^{2p}\leq\frac{1}{2^{p}}
\end{aligned}\end{equation}
for all $p\geq\lceil8CM\rceil$. By (\ref{22}), it follows from (\ref{21}) that
\begin{equation*}\label{23}\begin{aligned}
\|d^m\|^2&\leq2{\theta}^2\bigg(\sum^{m_0+\lceil8CM\rceil}_{i=m_0+1}
\prod^{m_0+\lceil8CM\rceil-1}_{j=i}2C^2\|s^{j-1}\|^2\bigg)\\
&\quad+2\theta^2\bigg(\sum^{m+1}_{i=m_0+\lceil8CM\rceil+1}\prod^{m}_{j=i-\lceil8CM\rceil}2C^2\|s^{j-1}\|^2\bigg)
+\|d^{m_0}\|^2\prod^{m}_{j=m_0+1}2C^2\|s^{j-1}\|^2\\
&\leq2\theta^2\bigg(\sum^{m_0+\lceil8CM\rceil}_{i=m_0+1}
\prod^{m_0+\lceil8CM\rceil-1}_{j=i}2C^2\|s^{j-1}\|^2\bigg)\\
&\quad+2\theta^2\bigg(\sum^{m+1}_{i=m_0+\lceil8CM\rceil+1}\frac{1}{2^{m+\lceil8CM\rceil+1-i}}\bigg)+\frac{\|d^{m_0}\|^2}{2^{m-m_0}}\\
&\leq2\theta^2\bigg(\sum^{m_0+\lceil8CM\rceil}_{i=m_0+1}
\prod^{m_0+\lceil8CM\rceil-1}_{j=i}2C^2\|s^{j-1}\|^2\bigg)+ 4\theta^2+\|d^{m_0}\|^2
\end{aligned}\end{equation*}
for  all $m\geq m_0+\lceil8CM\rceil+1$, which implies that $d^m$ is bounded independently of $m$. This contradicts Lemma \ref{L5}, and  the theorem is proved.\end{proof}
\section{Numerical experiments}
In this section,  we give some numerical results in the multiobjective optimization context to show the practical behavior of the Algorithm TT-PRP. Hence, in what follows, let $E:=\mathbf{R}_+^{m}$, $V$ be the canonical basis of $\mathbf{R}^m$ and $\xi:=[1,1,\ldots,1]^T\in\mathbf{R}^m$. All codes are written in MATLAB R2023b and run on a PC with  CPU 2.50 GHz, 16 GB RAM.   The considered methods  are as follows:
\begin{itemize}
\item
TT-PRP: The conjugate parameter and the step size are given by the Algorithm TT-PRP.

\item
TT-PRP1: The conjugate parameter is chosen as (\ref{1-}), and the step size satisfies (\ref{3}).
\item
PRP+ \cite{Luc}: The conjugate parameter is defined as $\max\{\beta^{PRP}_k,0\}$, and the step size satisfies (\ref{3}).
\item  SD \cite{Fli2}: The steepest descent method with a strong Wolfe line search procedure.
\end{itemize}

In our setting, the parameters in line search procedures are chosen as $\rho=0.0001$, $\sigma=0.1$ and $\mu=0.2$. The  implementation of line search processes  is detailed in \cite{Luc2}.
The  maximum allowed  number  of iterations  is set to 3000.
By Lemma \ref{L2}, we deduce that $\vartheta(x^k)=0$ iff $\Theta(x^k)=0$ and $\vartheta(x^k)\neq0$ iff $\Theta(x^k)<0$. From these, we can stop the algorithm at $x^k$ declaring convergence whenever $$\Theta(x^k)\geq -5\times\text{eps}^{\frac{1}{2}},$$
where $\Theta(x^k)=\lambda(x^k,\vartheta(x^k))+\|\vartheta(x^k)\|^2/2$ and $\text{esp}\approx2.22\times 10^{-16}$. $\vartheta(x^k)$ and  $\Theta(x^k)$ can be obtained by using the  quadprog function  in Matlab to solve  problem (\ref{19-}).

All considered problems are listed in Table 1. The columns ``Problem'' and ``Source'' represent the problem name and the corresponding reference, respectively.   The columns ``$n$", ``$m$'' and ``Convex'' represent the number of variables,   the number of objectives and the convexity  of the  test problems, respectively.
The initial points are taken from a box given in the column  ``$x_D$''.

\begin{table}[ht]
\centering
\caption{Test problems}
\vspace{5pt}
\begin{tabular}{l c c c c c c}
\toprule
Problem~~~&~~~Source~~~&~~~$n$~~~&~~~$m$~~~&~~~Convex~~~&~~~$x_D$\\
\midrule
AP3  & \cite{Ans} & $2$ & $2$ & N & $[-2,2]^n$            \\[1mm]
Far1   & \cite{Hub} & $2$     & $2$       & N & $[-1,1]^n$  \\[1mm]
FDS-1  & \cite{Fli} & $2$ & $3$ & Y & $[-2,2]^n$    \\[1mm]
FDS-2  & \cite{Fli}& $100$ & $3$ & Y &  $[-2,2]^n$\\[1mm]
FDS-3  & \cite{Fli}& $150$ & $3$ & Y &  $[-2,2]^n$\\[1mm]
Hil1 & \cite{Hil} & $2$ & $2$ & N & $[0,1]^n$    \\[1mm]
Lov3   & \cite{Lov} & $2$       & $2$       & N & $[-100,100]^n$  \\[1mm]
Lov4  & \cite{Lov} & $2$ & $2$ & N & $[-100,100]^n$   \\[1mm]
MGH16-1 & \cite{Mor} & $4$ & $50$ & N & $[-25,25]\bigcup[-5,5]\bigcup[-5,5]\bigcup[-1,1]$      \\[1mm]
MGH16-2 & \cite{Mor} & $4$ & $100$ & N & $[-25,25]\bigcup[-5,5]\bigcup[-5,5]\bigcup[-1,1]$      \\[1mm]
MGH26 & \cite{Mor} & $4$ & $4$ & N & $[-1,1]^n$      \\[1mm]
MMR5-1 & \cite{Mig} & 1000 & 2 & N & $[-10,10]^n$    \\[1mm]
MMR5-2 & \cite{Mig} & 200 & 2 & N & $[-100,100]^n$    \\[1mm]
MOP5   & \cite{Hub} & $2$       & $3$       & N & $[-1,1]^n$  \\[1mm]
MOP7  & \cite{Hub} & $2$ & $3$ & Y & $[-400,400]^n$   \\[1mm]
SLC2-1   & \cite{Sch} &   $1000$     & $2$       & Y & $[-10,10]^n$  \\[1mm]
SLC2-2   & \cite{Sch} &   $200$     & $2$       & Y & $[-100,100]^n$ \\[1mm]
SLC2-3   & \cite{Sch} &   $1000$     & $2$       & Y & $[-100,100]^n$ \\
\bottomrule
\end{tabular}
\end{table}

 By solving  each problem  100 times with starting points randomly generated in the specified box,  we provide the performance in Table 2 under the metrics of the success rate (\%), the median number of iterations (mit),  the median number  of function evaluations (mf) and the median number of gradient evaluations (mg).

\begin{table}
\footnotesize
\centering
\caption{Performance of the considered  methods with the indicators \%, mit, mf and mg.}
\vspace{5pt}
\begin{tabular}{|ccccc|ccccc|}
\cline{1-10}
   & TT-PRP & TT-PRP1 & PRP+ & SD  &  & TT-PRP & TT-PRP1 & PRP+ & SD   \\
\hline
                   AP3 &      &        &        &         &MGH16-2 &      &      &      &    \\
\hline
               \%     & 100.0  & 100.0  & 100.0  & 100.0  & \%     &100.0 &100.0 &98.0 &86.0\\
               eit     & 7.0 & 7.0  & 7.0  & 69.0           &eit     &39.0   &42.5   &58.0   &176.0     \\
                mf  & 46.0& 51.0 & 55.0 & 468.5       & mf  &306.0  &358.5  &591.0  &3392.0          \\
                mg  & 37.0& 42.0 & 47.0 & 395.0 &mg  &267.5  &317.0  &526.5  &3182.5 \\
                \hline
                Far1 &   &&  &                            &MGH26& & & & \\
\hline

            \%     & 100.0  & 100.0  & 100.0  & 100.0 & \%     & 100.0  & 100.0  & 100.0  & 100.0  \\
               eit     & 33.0 & 32.5 & 38.0 & 70.5   & eit & 6.0 & 6.0 &6.0& 8.5 \\
               mf  & 276.0& 308.0& 383.5& 523.5 & mf & 27.0 & 31.0 &37.0&64.0\\
  mg  & 242.0& 273.0& 339.0 & 451.0 & mg & 19.5 & 24.0&29.0& 53.0\\
  \hline
FDS-1 &&&&                                             &MMR5-1 & & & &\\
\hline
          \%     &100.0 &100.0 &100.0 &100.0  & \%     & 100.0  & 100.0  & 100.0  & 100.0 \\
            eit     &6.0   &6.0   &6.0   &8.0   & eit & 91.5 & 97.0 &185.0& 217.0  \\
            mf  &51.0  &59.0  &61.0  &84.0 & mf & 525.0 & 655.0 &1305.0&1493.0\\
               mg  &43.0  &51.0  &53.0  &73.0 & mg & 426.5 & 543.5&1122.5& 1276.5 \\

  \hline
 FDS-2 & &&&                                  &MMR5-2& &  && \\
\hline
              \%     & 100.0 & 100.0 & 100.0 & 100.0 & \%     & 100.0  & 100.0  & 100.0  & 100.0  \\
             eit     & 123.5 & 138.0&171.5 & 221.5     & eit & 131.5 & 141.5 &223.0& 285.0 \\
               mf  & 879.0&1291.0&1537.0&2687.5 & mf & 703.0 & 840.5 &1712.5&2304.5\\
             mg  &745.0&1168.5&1335.0&2464.0& mg & 574.5 & 697.0&1492.0& 1985.0\\
 \hline
    FDS-3 &   &&&                                      &MOP5 & &  &&      \\
\hline
          \%     &100.0 &100.0 &100.0 &100.0  & \%     & 100.0  & 100.0  & 100.0  & 100.0\\
            eit     &126.0   &144.5   &179.0   &242.5   & eit & 2.0 & 2.0 &3.0& 3.0  \\
               mf  &890.5  &1334.5  &1661.5  &2875.5  & mf & 19.0 & 21.0 &28.0&28.0  \\
              mg  &756.0  &1178.0  &1440.0  &2653.5 & mg & 15.0 & 17.0&23.0& 23.0\\

  \hline
Hil1 &   &&&                                    &MOP7 & & & &                         \\
\hline
          \%     &100.0 &100.0 &100.0 &100.0  & \%     & 100.0  & 100.0  & 100.0  & 100.0  \\
            eit     &6.5   &7.0   &10.0   &16.5  & eit & 7.0 & 7.0 &8.0& 28.5   \\
               mf  &38.0  &56.0  &78.0  &104.0  & mf & 36.5 & 48.0 &58.0&169.0 \\
              mg  &29.5  &47.0  &67.0  &85.0 & mg & 27.5 & 39.0&48.0& 138.0\\
\hline
Lov3 &   &&&                                      &SLC2-1 & &  &&      \\
\hline
          \%     &100.0 &100.0 &100.0 &100.0  & \%     & 100.0  & 100.0  & 100.0  & 100.0\\
            eit     &2.0   &2.0   &3.0   &3.0   & eit & 18.0 & 18.5 &34.0& 63.5  \\
               mf  &18.0  &18.0  &26.0  &26.0  & mf & 238.0 & 303.5 &538.0&933.0  \\
              mg  &14.0  &14.0  &21.0  &21.0 & mg & 218.0 & 283.0&498.5& 872.5\\
\hline
Lov4 &  & &&                                        &SLC2-2& & &&\\
\hline
          \%     &100.0 &100.0 &100.0 &100.0 & \%     & 100.0  & 100.0  & 100.0  & 100.0\\
            eit     &1.0   &1.0   &1.0   &1.0  & eit & 18.5 & 20.0 &33.0& 61.0    \\
               mf  &6.0  &6.0  &6.0  &6.0  & mf & 230.0 & 336.0 &360.5&813.0  \\
              mg  &5.0  &5.0  &5.0  &5.0 & mg & 209.5 & 314.0&325.5& 739.5\\
\hline
MGH16-1 &   &&&                                      &SLC2-3 & &  &&      \\
\hline
          \%     &100.0 &100.0 &98.0 &93.0  & \%     & 100.0  & 100.0  & 100.0  & 100.0\\
            eit     &34.0   &36.0   &48.5   &97.0   & eit & 24.0 & 26.5 &42.0& 76.0  \\
               mf  &265.0  &293.0  &479.0  &1762.5  & mf & 338.0 & 362 &558.0&879.5  \\
              mg  &231.5  &252.0  &421.0  &1655.0 & mg & 309.5 & 331.5&495.5& 780.5\\
\hline
\end{tabular}
\end{table}

As observed, the TT-PRP and TT-PRP method exhibits well convergence  for all test problems. For  problems AP3, FDS-1, Lov3, Lov4, MGH26, MOP5 and MOP7, the   TT-PRP and TT-PRP1 methods show  comparable performance to the PRP+ method. In some small-scale nonconvex problems, such as Far1 and  Hil1, the  TT-PRP and TT-PRP1 methods exhibit a slight advantage over  the PRP+ method. For large-scale  problems, such as FDS-2, FDS-3, MGH16-1, MGH16-2,  MMR5-1, MMR5-2, SLC2-1, SLC2-2 and SLC2-3, the methods TT-PRP  and TT-PRP1 outperform the  PRP+ and SD methods. This suggests that the descent property and the conjugate property are  exhibited by the  TT-PRP and TT-PRP1 methods, which are superior to the  PRP+ and SD methods.
Compared with the TT-PRP1 method, the  TT-PRP method has a smaller  median number of iterations  in most test problems, but the median numbers of function evaluations and gradient evaluations seem to be lower when the median number of iterations is similar. This indicates that,  due to the relaxation of the line search,  the TT-PRP method can achieve the  appropriate step size  with  fewer  gradient evaluations and function evaluations than the TT-PRP1 method.

To  show a clearer  comparison of the performance of the considered methods, we use the performance profile proposed in \cite{Dol}. Let  $S$ be the method set and $P$ be the problem set. Denote $t_{p,s}$ as the performance measurement of method $s\in S$ on problem $p\in P$. Define $\rho_s:[1,\infty)\to[0,1]$ by
$$\rho_s(\omega):=\frac{|\{p\in P: r_{p,s}\le\omega\}|}{|P|}, $$
where $ r_{p,s}:=t_{p,s}/\min\{t_{p,s}:s\in S\}$, called the performance ratio. Clearly,  $\rho_s(1)$ represents the proportion of problems in the set $P$ that are solved most efficiently by method $s$,  and a larger value of $\rho_s(\omega)$ indicates the better performance of method $s$. The  comparison of  the considered methods in terms of efficiency and robustness  can be observed  on the left and right vertical axes of the matched performance profiles, respectively. The performance profiles are presented in Figure 1 using 100  random initial points for all test problems with the performance measurements: (a) CPU time; (b) Count of iterations; (c) Count of gradient evaluations; (d) Count of function evaluations.

Figure 1  shows that the TT-PRP and TT-PRP1 methods obviously exhibit higher  efficiency and robustness  than the PRP+ and SD methods for all considered measurement terms. In addition, under comparable performance curves in terms of  the count of iterations, the TT-PRP method  demonstrates  better performance over the  TT-PRP1  method with respect to the number of gradient  and  function evaluations. This  suggests again that the generalized Wolfe line search reduces the computational cost of  gradient and function evaluations compared to the strong Wolfe line search due to the relaxation.

Overall, the TT-PRP method exhibits superior performance compared to the other three methods, as shown in Table 2 and Figure 1. Therefore, the proposed method shows great  potential for practical applications.

\begin{figure}
    \centering
    \setcounter{subfigure}{0}
    \subfigure[CPU time]{
        \includegraphics[width=0.47\textwidth]{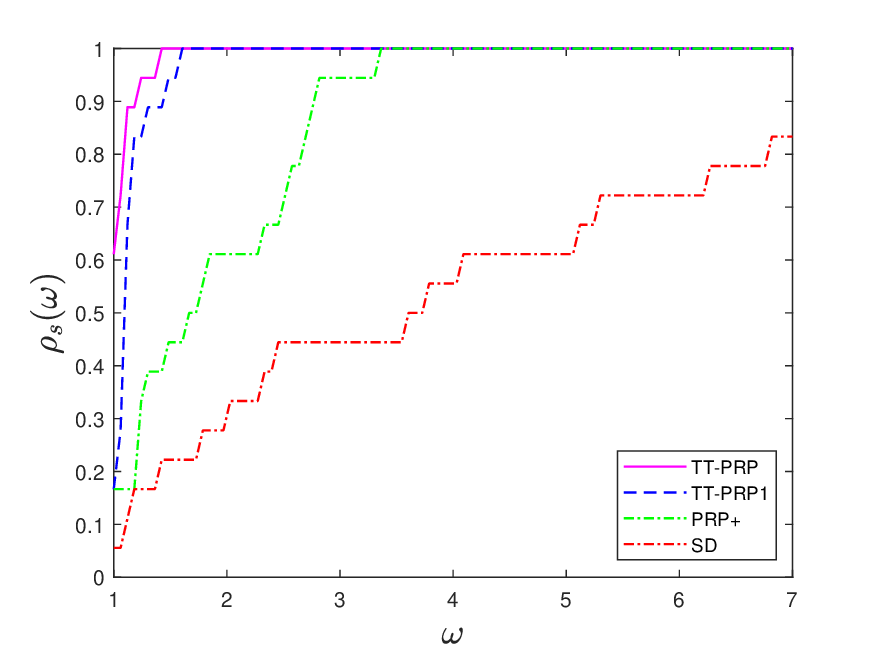}
    }
    \subfigure[Count of iterations] {
        \includegraphics[width=0.47\textwidth]{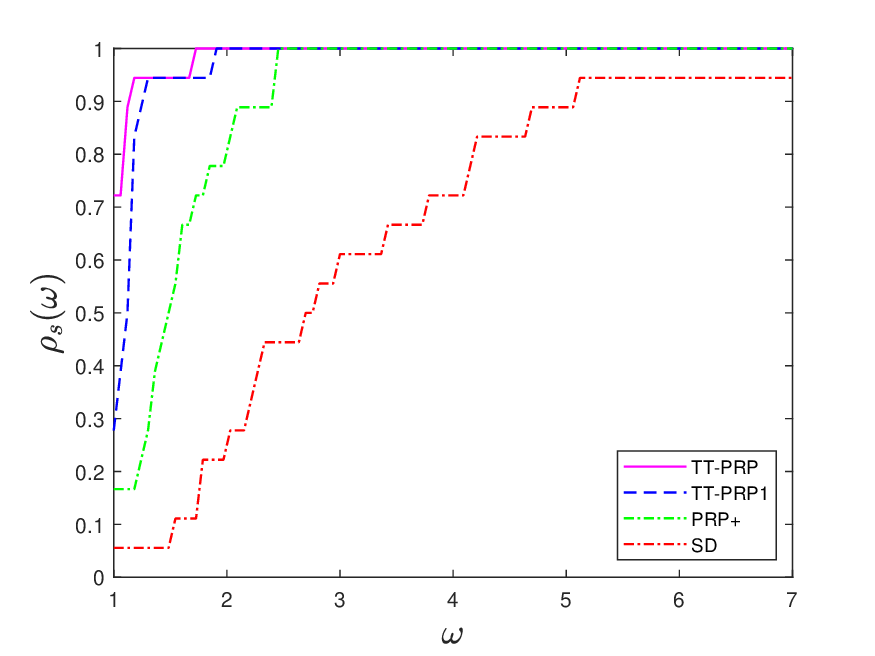}
    }\\[1mm]
    \subfigure[Count of gradient evaluations]{
        \includegraphics[width=0.47\textwidth]{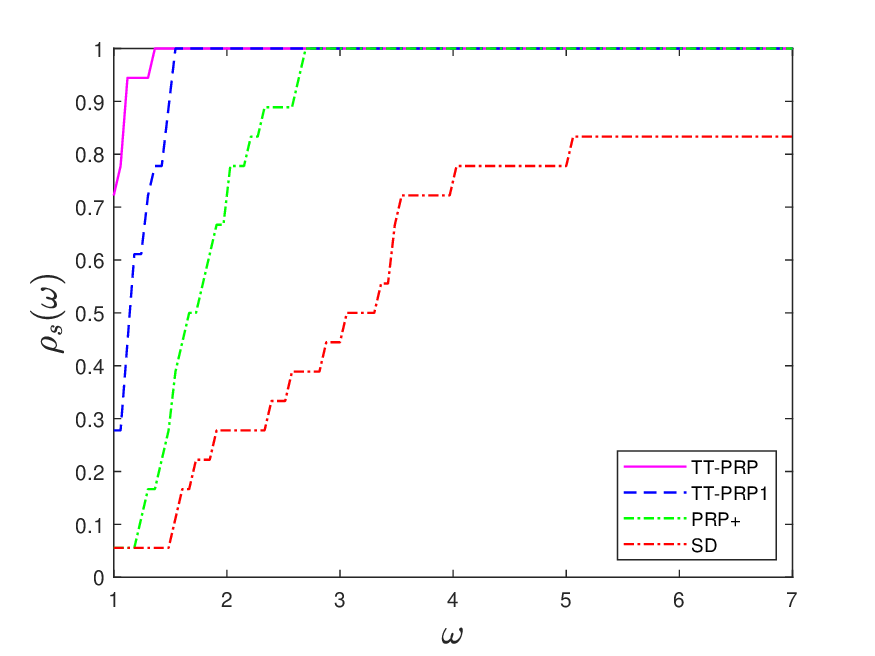}
    }
    \subfigure[Count of function evaluations]{
        \includegraphics[width=0.47\textwidth]{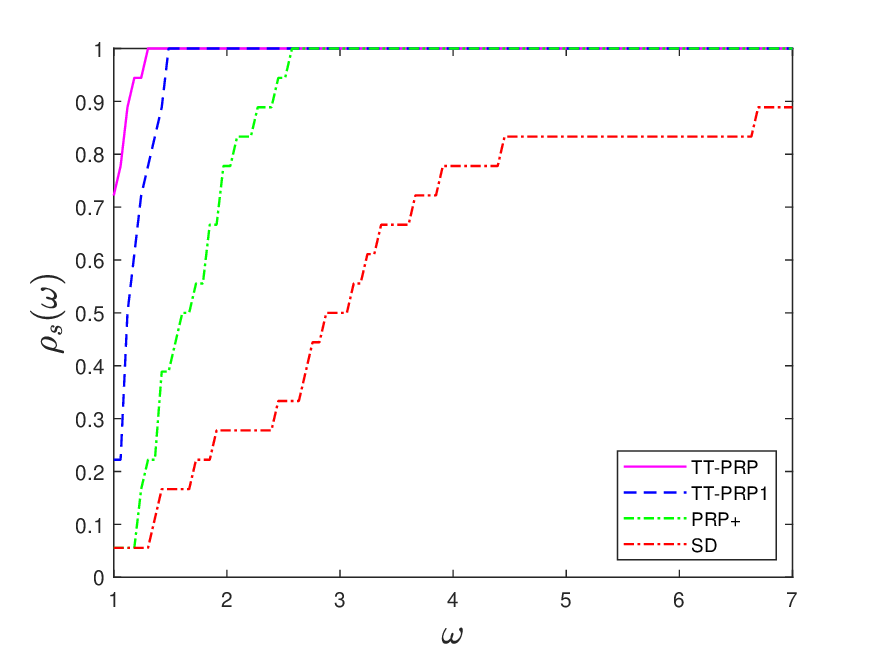}
    }
    \caption{Performance profiles based on the performance measurements (a)-(d) }
\end{figure}

The ability  to successfully approximate Pareto frontiers is an important evaluation criterion for vector optimization algorithms. To evaluate the performance of the TT-PRP method in generating Pareto frontiers, the value spaces of the bicriteria problems (AP3, Far1 and Hil1) and the three-criteria problems (FDS-1, MOP5 and MOP7) are plotted in Figures 2 and 3  using the TT-PRP method.  For these problems, 400 starting points are randomly generated (except for Far1, which uses 600  starting points due to the complexity of the image space) in the given box to plot the Pareto frontiers.  In these figures, a line segment represents a complete iteration, where the green dot denotes the final iteration point and the opposite end of the line segment represents the corresponding initial point.
Figures 2 and 3 show that
the TT-PRP method can give an  effective estimation of Pareto frontiers for the considered problems  using a reasonable number of starting points.
\begin{figure}
    \centering
    \setcounter{subfigure}{0}
    \subfigure[Image set of AP3]{
        \includegraphics[width=0.45\textwidth]{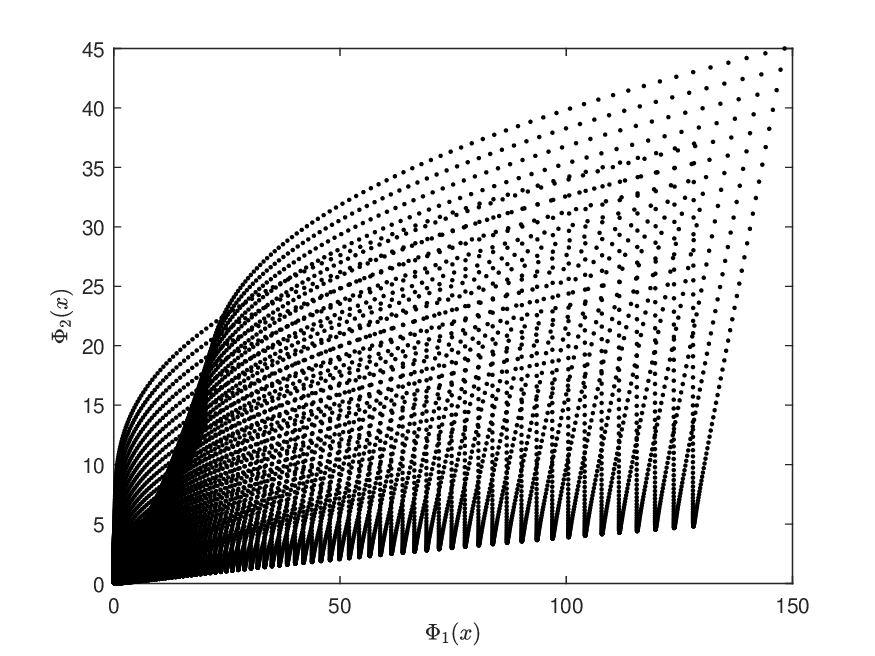}
    }
    \subfigure[AP3] {
        \includegraphics[width=0.45\textwidth]{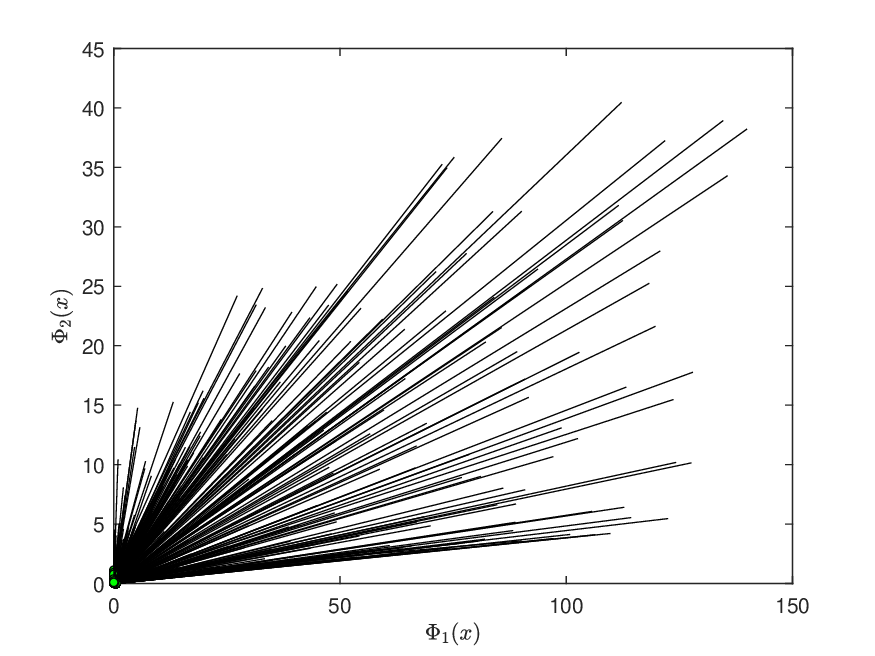}
    }\\[1mm]

    \subfigure[Image set of Far1]{
        \includegraphics[width=0.45\textwidth]{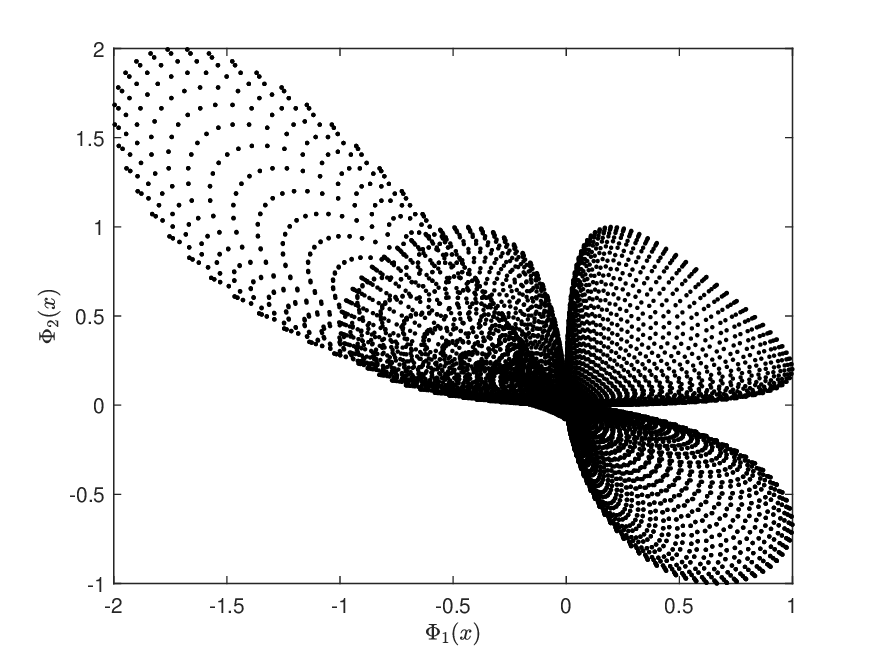}
    }
    \subfigure[Far1]{
        \includegraphics[width=0.45\textwidth]{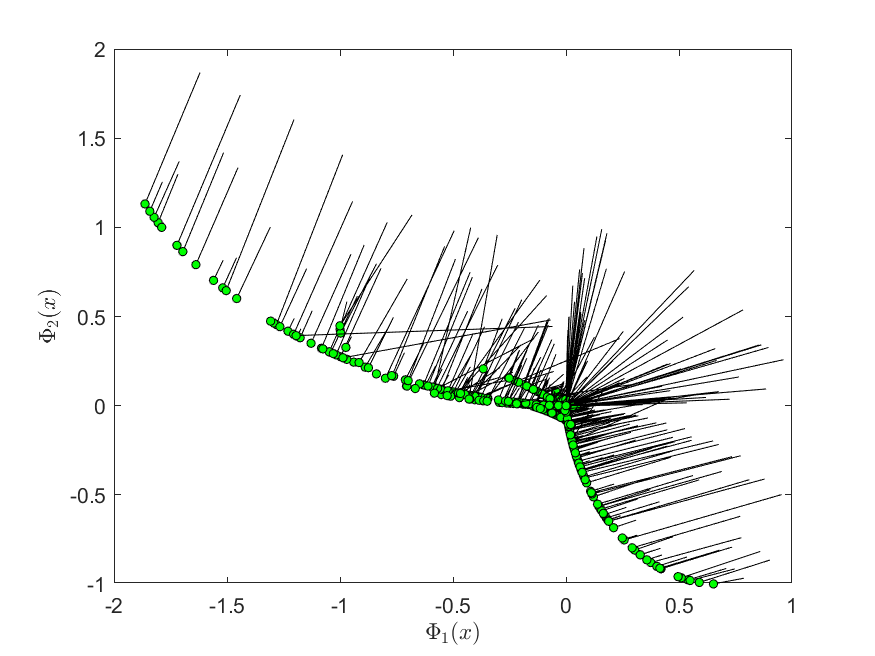}
    }\\[1mm]

    \subfigure[Image set of Hil1]{
        \includegraphics[width=0.45\textwidth]{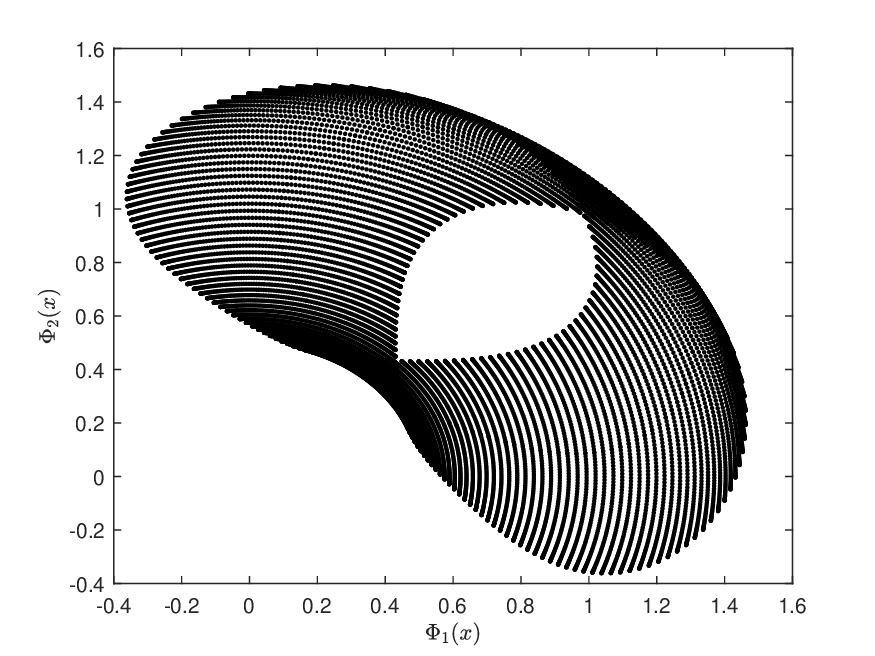}
    }
    \subfigure[Hil1]{
        \includegraphics[width=0.45\textwidth]{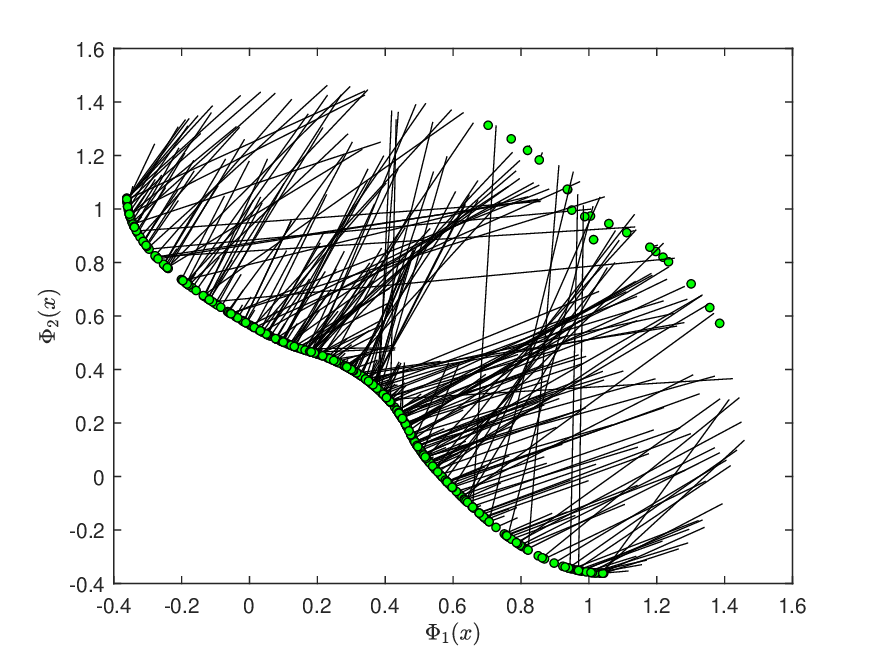}
    }\caption{ Approximations of  Pareto frontiers generated by the TT-PRP   method and  corresponding  image sets   for bicriteria problems. }
\end{figure}
\begin{figure}
    \centering
    \setcounter{subfigure}{0}
    \subfigure[Image set of FDS-1]{
        \includegraphics[width=0.47\textwidth]{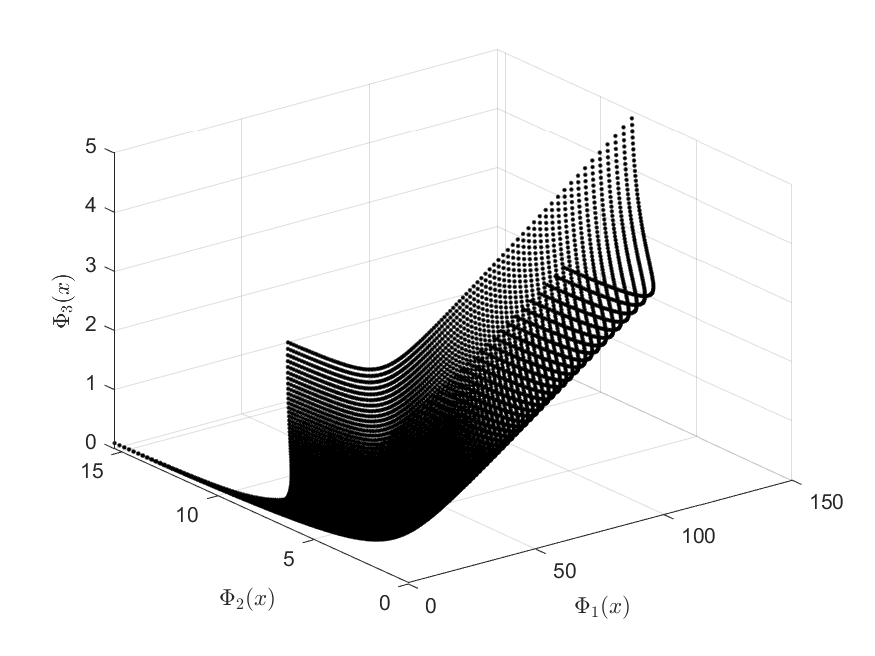}
    }
    \subfigure[FDS-1] {
        \includegraphics[width=0.47\textwidth]{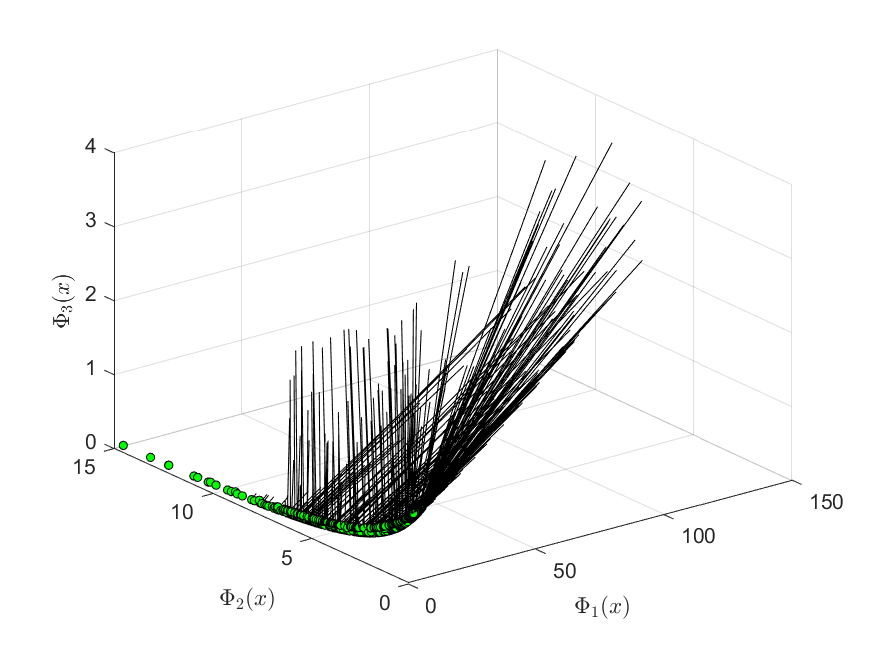}
    }\\[1mm]

    \subfigure[Image set of MOP5]{
        \includegraphics[width=0.47\textwidth]{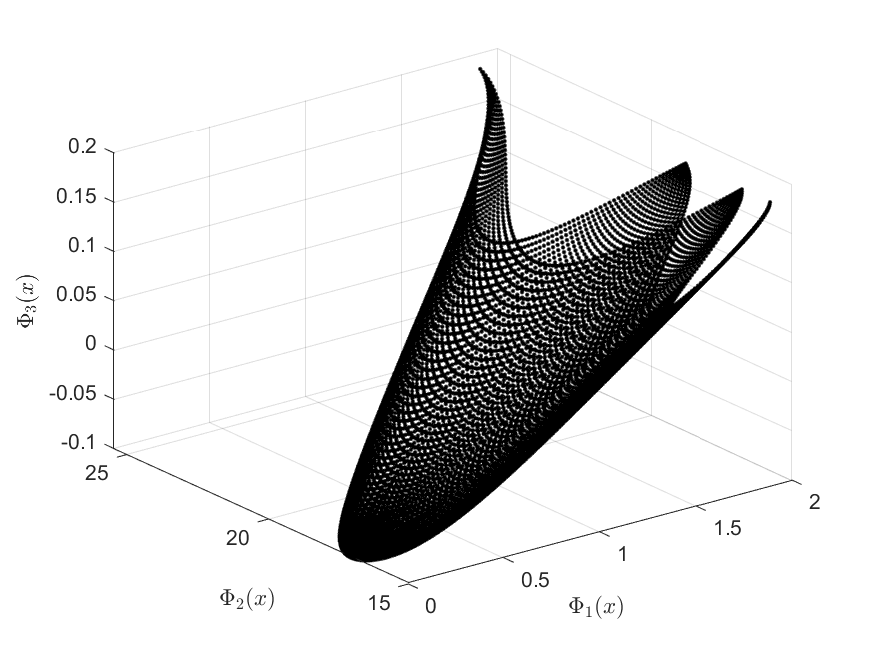}
    }
    \subfigure[MOP5]{
        \includegraphics[width=0.47\textwidth]{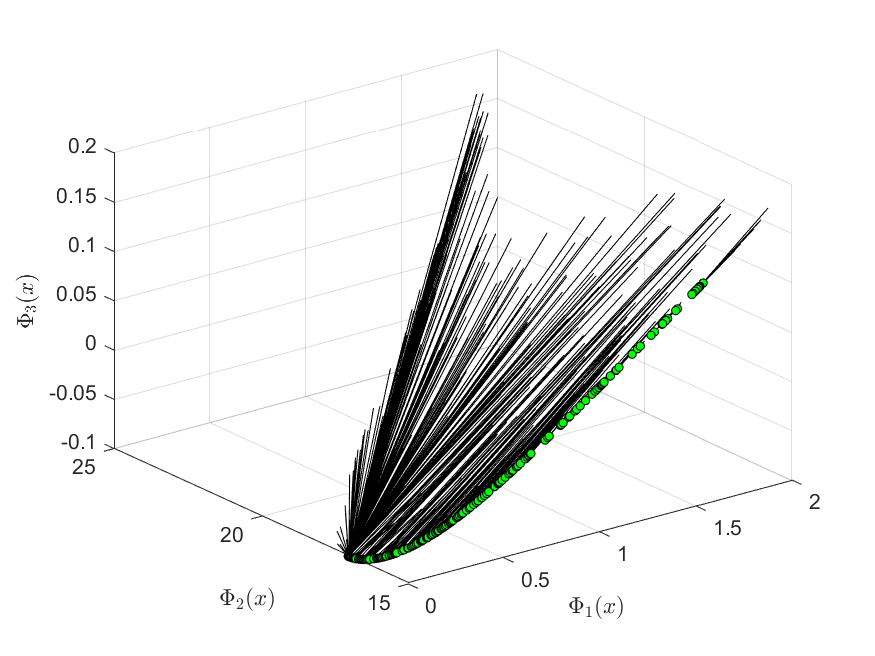}
    }\\[1mm]

    \subfigure[Image set of MOP7]{
        \includegraphics[width=0.47\textwidth]{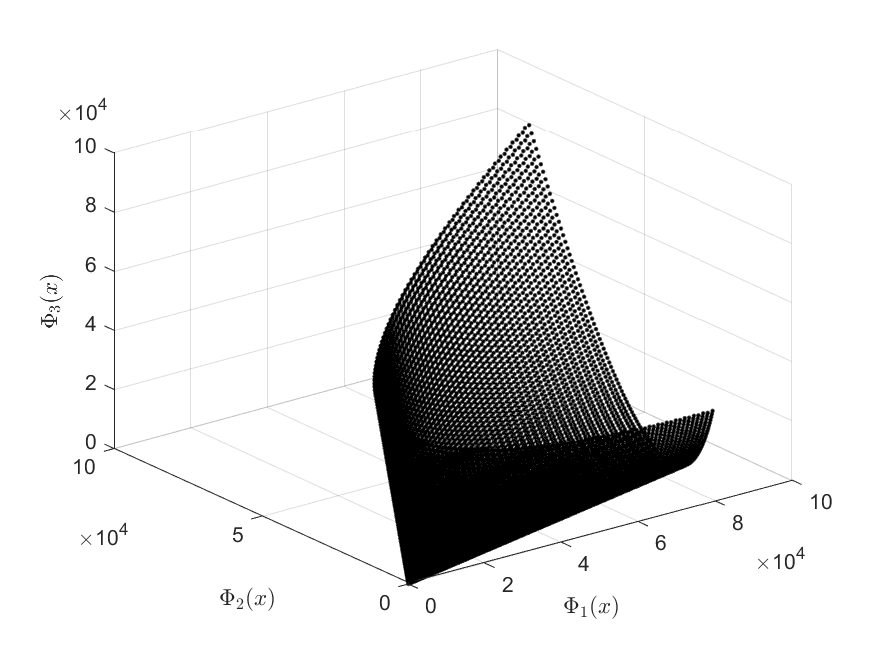}
    }
    \subfigure[MOP7]{
        \includegraphics[width=0.47\textwidth]{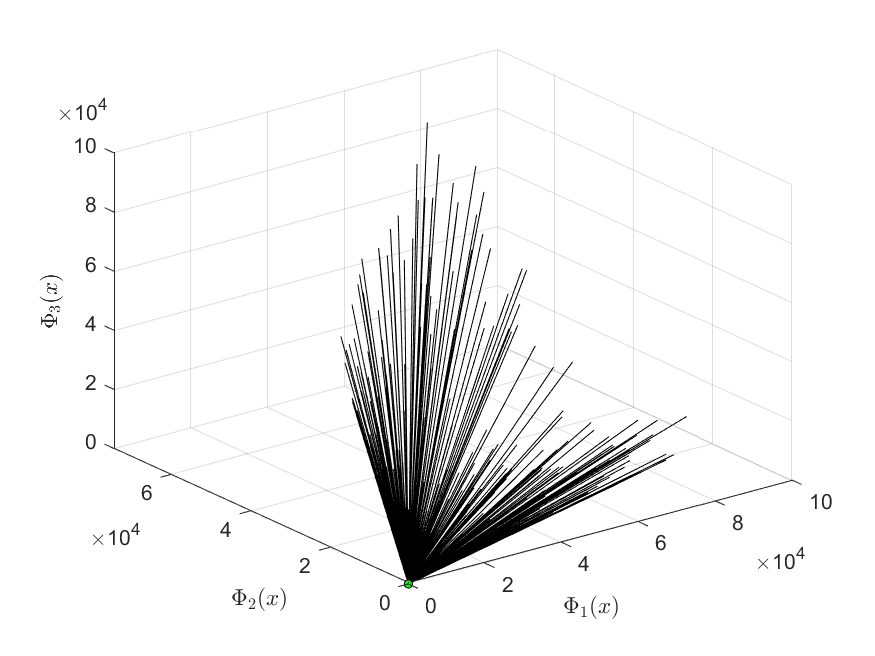}
    }

  \caption{Approximations of  Pareto frontiers generated by the TT-PRP   method and  corresponding  image sets   for three-criteria problems. }
\end{figure}

\section{Final remarks and future directions}

We  propose a novel  three-term PRP CG method for  vector optimization, which can be regarded  as the first extension of  three-term CG methods to the field of vector optimization.  This vector  extension retains the main advantage of classical three-term CG methods, i.e.,  the generated search direction  is always a sufficient descent direction regardless of the line search and without the modification of conjugate parameters.   This serves as a significant supplement to the PRP CG method for vector optimization, which, as mentioned in Example \ref{e1}, may not independently generate descent directions. In addition, based on a new Wolfe-type line search, we establish the global convergence of the
proposed method without imposing restrictive conditions. Numerical experiments show that the proposed scheme has better performance than the PRP vector extension.
However,  the convergence conclusion in this work  is limited to the Wolfe-type line search. The corresponding convergence result  under more relaxed conditions, such as the Armijo-type line search,  has not yet been investigated.
So it would be  interesting to explore this problem and consider the corresponding numerical performance.

Since the  vector extension of CG methods  may lose  the descent property of search directions, it would be worthwhile to explore the vector extension  of other three-term CG method  while retaining  its great properties as in scalar optimization.
Additionally, as far as we know, developing a general form of three-term CG methods for vector optimization remains an open problem, which is the key direction of our future research.

\section*{Declarations}
The authors declare  that they have no competing interests.

\end{document}